\tikzstyle{level 1}=[level distance=4cm, sibling distance=3.5cm,->]
\tikzstyle{level 2}=[level distance=4cm, sibling distance=2cm,->]
\tikzstyle{bag} = [text width=2em, text centered]
\tikzstyle{end} = []
\DeclarePairedDelimiter{\set}{ \{ } { \} }
\newcommand{\sfrac}[2]{ 
	{\raise0.8ex\hbox{$#1$} \!\mathord{\left/ 
			{\vphantom {#1 #2}}\right.\kern-\nulldelimiterspace} 
		\!\lower0.8ex\hbox{$#2$}} 
}
\newtheorem{teo}{Theorem}[section]
\newtheorem{lema}[teo]{Lemma}
\newtheorem{prop}[teo]{Proposition}
\theoremstyle{definition}
\newtheorem{defin}[teo]{Definition}
\newtheorem{notz}{Notation}
\newtheorem{obs}[teo]{Remark}
\newcommand{\N}{\mathbb{N} }
\newcommand{\R}{\mathbb{R} }
\newcommand{\E}{\mathbb{E} }
\newcommand{\bbG}{\mathbb{G} }
\newcommand{\bbF}{\mathbb{F} }
\newcommand{\bbX}{\mathbb{X}}
\newcommand{\F}{\mathcal{F}}
\newcommand{\calG}{\mathcal{G}}
\newcommand{\calA}{\mathcal{A}}
\newcommand{\calD}{\mathscr{D}}
\newcommand{\calU}{\mathcal{U}}
\newcommand{\calL}{\mathcal{L}}
\newcommand{\calH}{\mathcal{H}}
\newcommand{\hatu}{\hat{u}}
\newcommand{\hatX}{\hat{X}}
\newcommand{\hatkappa}{\hat{\kappa}}
\newcommand{\hatk}{\hat{\kappa}}
\newcommand{\hatb}{\hat{b}}
\newcommand{\tH}{\widetilde{H}}
\newcommand{\esssup}{\text{ess\,sup}}
\DeclareRobustCommand{\rchi}{{\mathpalette\irchi\relax}}
\newcommand{\irchi}[2]{\raisebox{\depth}{$#1\chi$}} 
\DeclareMathSymbol{\shortminus}{\mathbin}{AMSa}{"39}
\newcommand{\probspace}{(\Omega,\F,P)}
\renewcommand{\t}{t \in [0,T]}
\DeclareRobustCommand{\Arrow}[1][]{%
	\check@mathfonts
	\if\relax\detokenize{#1}\relax
	\settowidth{\dimen@}{$\m@th\rightarrow$}%
	\else
	\setlength{\dimen@}{#1}%
	\fi
	\sbox\z@{\usefont{U}{lasy}{m}{n}\symbol{41}}%
	\begin{picture}(\dimen@,\ht\z@)
		\roundcap
		\put(\dimexpr\dimen@-.7\wd\z@,0){\usebox\z@}
		\put(0,\fontdimen22\textfont2){\line(1,0){\dimen@}}
	\end{picture}%
}
\title{Stochastic Volterra equations with time-changed Lévy noise and maximum principles}
\author{Giulia di Nunno\thanks{Department of Mathematics, University of Oslo, P:O: Box 1053 Blindern, N-0316 Oslo, Email: giulian@math.uio.no.} \thanks{Department of Business and Management Science, NHH Norwegian School of Economics, Helleveien 30, N-5045 Bergen.}\and Michele Giordano\thanks{Department of Mathematics, University of Oslo, P:O: Box 1053 Blindern, N-0316 Oslo, Email: michelgi@math.uio.no}}
\date{February 8th 2023}
\begin{document}
	\allowdisplaybreaks
	\numberwithin{equation}{section}
	\thispagestyle{empty}
	\vspace{1 cm}
	\maketitle
	\begin{abstract}
	\noindent Motivated by a problem of optimal harvesting of natural resources, we study a control problem for Volterra type dynamics driven by time-changed Lévy noises, which are in general not Markovian. To exploit the nature of the noise, we make use of different kind of information flows within a maximum principle approach. For this 
	we work with backward stochastic differential equations (BSDE) with time-change and exploit the \emph{non-anticipating} stochastic derivative introduced in \cite{DiNunno2}. We prove both a sufficient and necessary stochastic maximum principle.
		\end{abstract}
 \textbf{Keywords}: time-change; conditionally independent increments; backward stochastic Volterra integral equation; maximum principle; stochastic Volterra equations; non-anticipating stochastic derivative\\
\textbf{MSC 2020}: 60H10; 60H20; 93E20; 60G60; 91B70;

	\section{Introduction}
	Optimal harvesting is a fairly classical problem in control theory and it is still a timely question to address when thinking of sustainability in the management of natural resources. 
 In this work we deal with a problem of optimal harvesting from a population, the growth of which is modelled by Volterra time dynamics of the type
\begin{equation}\label{X(t) example}
	    X(t) = X_0+\int_0^t \left(r(t,s) - K u(s)\right)X(s) ds +\int_0^t\sigma(s)X(s)dB(s), \quad \t.
	\end{equation}
The term $r$ represents the growth rate, the constant $K$ is the catchability coefficient, and the control $u$ is the fishing effort.
The Volterra structure is inherited from the deterministic analogous models that can be found, e.g., in \cite{VolterraFish,VolterraFish2,VolterrraIntegral}. As we can see, this form of time dependence is often used in the description of fish populations. When considering fish as a commodity, the modelling of fish population is representing the possible dynamics of offer, in the interplay between offer and demand.
In our work, however, we consider Volterra \emph{stochastic} integral equations, which represent a natural extension including the uncertainty of the environment influencing the population growth. For this we are motivated by \cite{StochasticFish,StochasticFish2}.

Our model has an element of novelty with respect to the others presented. This is given by the nature of the noise $B$ which is associated to a time-changed Brownian motion. This is well motivated by the clustering effects that such noises can described. 
For the description on how time-change helps to described clustering, we can refer to a first discussion in \cite[Chapter IV, 3e]{Shiryaev} and a more recent study \cite[Chapter 3]{Saakvitne} in the context of market microstructure. Within population dynamics the evidence of clustering is largely discussed in the recent  literature in biology and ecology. See just as example \cite{logistic}. 

We remark that in the literature of mathematical finance, dynamics of the form \eqref{X(t) example}, but with L\'evy type noises were used in models \cite{Oksendal_Forward}. On the other side, time-change has been suggested in the study of volatility modelling, e.g. \cite{BNS, CW,GMY,Swish, veraart-tc}, energy markets, e.g. \cite{electricity}, and default models, e.g.\cite{default}. Also it is used in kinetic theory, see e.g. \cite{menonquick}.

Keeping our motivation in mind, we treat here stochastic control for general Volterra type dynamics, allowing also for jumps:
	\begin{equation}\label{X_t introduction}
	X^u(t)=X_0+\int_0^t b(t,s,\lambda_s,u(s),X^u(s\shortminus))ds+\int_0^t\!\!\!\int_{\mathbb{R}} \kappa(t,s,z,\lambda_s,u(s),X^u(s\shortminus))\mu(dsdz),
	\end{equation}	
	where the driving noise $\mu$ is given by the random measure
	\begin{equation}\label{mu introduction}
		\mu(\Delta)=B(\Delta \cap [0,T]\times \{0\})+\widetilde{H}(\Delta \cap [0,T]\times \mathbb{R}_0), \quad \Delta\in\mathcal{B}([0,T]\times \mathbb{R}),
	\end{equation} 
	which is the mixture of a \emph{conditional Gaussian measure} $B$ on $[0,T]\times\{0\}$ and a \emph{conditional centered Poisson measure} $\widetilde{H}$ on $[0,T]\times \mathbb{R}_0$. Here $\R_0:=\mathbb{R}\backslash\{0\}$ and $\mathcal {B}$ represents the Borel $\sigma$-algebra. Both $B$ and $\widetilde{H}$ are set in relationship with a time-changed Brownian motion and time-changed Poisson measure, respectively, via Theorem 3.1 in \cite{Serfozo} (see also \cite{Grigelionis}). Note that the coefficients in \eqref{X_t introduction} may also depend on the time-change via the process $\lambda$.
	
	The time-change processes involved are of the form 
	\begin{equation*}
		\Lambda_t(\omega)=\int_0^t\lambda_s(\omega)ds, \quad (t,\omega)\in[0,T]\times \Omega,
	\end{equation*}
	$(T>0)$. Thus the driving noises (which include jumps) are actually beyond the Brownian and the pure Lévy framework. We abandon noises with independent increments and effectively deal with quite general but still treatable martingales.

	Our goal is to find the optimal control $\hatu$ such that
	\begin{equation}\label{J(u) wrt F}
	J(\hatu)=\sup_{u\in\mathcal{A}^{\bbF}}J(u)=\sup_{u\in\mathcal{A}^{\bbF}}\E\left[\int_0^T F(t,\lambda_t,u(t),X^u(t))dt+G(X^u(T))\right],
	\end{equation}
	among the set $\calA^{\bbF}$ of admissible $\bbF$-adapted controls, where 
    $\mathbb{F}=\set{\mathcal{F}_t, \ \t}$ represents the smallest right-continuous filtration generated by $\mu$.
	
	Optimization problems such as \eqref{X_t introduction}, \eqref{J(u) wrt F} are studied, e.g. in \cite{Bonaccorsi, Oksendal1, Oksendal_Forward}.
	In \cite{Oksendal1, Oksendal_Forward}
	the authors present also a sufficient maximum principle and the dynamics include jumps making use of Malliavin calculus. However, being the restrictions on the domain of the Malliavin derivative extremely serious in the context of optimal control, the authors have lifted the study into the white noise framework and work with the Hida-Malliavin calculus on the space of stochastic distributions. The Hida-Malliavin calculus is taylored for Brownian  and for centered Poisson random noises, hence this approach cannot be taken in our work since our driving noises are \emph{not} of the required nature.
	On the other hand, in \cite{Bonaccorsi}, the authors propose a backward SDE approach to solve \eqref{J(u) wrt F}. This is possible due to the introduction of memory in \eqref{X_t introduction} by means of convolution with a completely monotone kernel which allows for a Markovian representation of the solution of \eqref{X_t introduction}.  
	
	Note that a Malliavin/Skorokhod calculus extension to noises with conditional independent increments, is proposed in \cite{DS-Poisson} and \cite{Yablonski}.
	By this, however, we cannot solve the critical issue of the natural restriction of the domains of the involved operators and a Hida-Malliavin type extension is yet not available in the literature. Our approach is then to make use of the \emph{non-anticipating} (NA) derivative. The NA derivative, introduced in \cite{DiNunnoDerivatives} for general martingales and then extended to martingale random fields in \cite{DiNunno2}, is the dual of the Itô integral and has an explicit representation in terms of limit of simple integrands in the Itô framework. Also, the NA derivative provides explicit stochastic integral representations. We stress that, contrarily to the Malliavin derivative, the domain of the NA derivative is the whole $L^2(dP)$, thus not creating problems in the context of optimal controls. To the best of our knowledge this is the first time that the non-anticipating derivative is used in optimal control problems such as \eqref{J(u) wrt F}.
	
	Our approach to the optimization problem \eqref{J(u) wrt F} is based on the analysis of the noise and the information flows associated. Indeed, we observe that there are two filtrations of interest. The first one is the already mentioned $\bbF$ and the second is the filtration $\mathbb{G}:=\set{\mathcal{G}_t, \ \t}$, where $\calG_t:=\F_t \vee \F^{\Lambda}$ generated by $\mu$ and the entire history $\F^\Lambda$ of the time-change processes.
	Note that while $\F_0$ is substantially trivial, $\calG_0=\F^\Lambda$.
	We can regard $\bbG$ as the initial enlargement of $\bbF$ or, we can see $\mathbb{F}$ as partial information with respect to $\mathbb{G}$. With this observation in hands, we work out the solution to problem \eqref{J(u) wrt F} as an optimization problem under partial information. In this we have taken inspiration from \cite{Oksendal3}, where the concept of partial information is however not associated to the properties of the noise, and from \cite{DiNunno1}, where the dynamics are however not of Volterra type. Also, for completeness, we show that our techniques provide necessary and sufficient conditions for the optimization problem
	
	\begin{equation}\label{J(u) wrt G}
	J(\hatu)=\sup_{u\in\mathcal{A}^{\bbG}}J(u)=\sup_{u\in\mathcal{A}^{\bbG}}\E\left[\int_0^T F(t,\lambda_t,u(t),X^u(t))dt+G(X^u(T))\right]
	\end{equation}
	on the set $\calA^\bbG$ of admissible $\bbG$-adapted controls, where $\calA^{\bbF}\subset \calA^{\bbG}$. 
	
	The study of maximum principles is associated to a stochastic Hamiltonian map of the so-called dual variables, which in turn are obtained from the solution of a backward stochastic equation. In the sequel, we deal with backward stochastic differential equations (BSDEs) of type 	
	\begin{equation}\label{BSVIE_Intro}
		p(t)=\xi(t)+\int_t^T g(s,\lambda_s,p(s\shortminus),q(s,\cdot))ds-\int_t^T\!\!\!\int_{\mathbb{R}} q(s,z)\mu(dsdz),
	\end{equation}  
	under the filtration $\bbG$. Notice that, these backward equations are not of Volterra type. This is because our Hamiltonian functional is going to involve also the NA-derivatives of the adjoint process $p$. A different approach could have been to follow the work in \cite{Oksendal_Forward}, where the authors deal with a backward stochastic Volterra integral equation (BSVIE) of the form:
	\begin{equation*}
	    p(t)=\xi(t)+\int_t^T g(s,\lambda_s,p(s\shortminus),q(t,s,\cdot))ds-\int_t^T\!\!\!\int_{\mathbb{R}} q(t,s,z)\mu(dsdz).
	\end{equation*}
	Even though this approach would allow us to work with simpler Hamiltonian functionals (in the sense that the NA-derivative of $p(t)$ would not be involved) we would need to assume smoothness conditions with respect to $t$ on $q(t,s,z)$ and, to the best of our knowledge, is not clear to what extent those properties are satisfied.
	
	\noindent Existence and uniqueness of \eqref{BSVIE_Intro} can be retrieved from \cite{DiNunno1}. The study of the BSDE under $\bbG$ is in itself critically based on the stochastic integral representation in the form 
	\begin{equation}\label{stochastic integral representation}
		\xi=\xi^{0}+\int_0^T\!\!\!\int_\R \phi(s,z) \mu(dsdz),
	\end{equation}
	where $\xi^{0}$ is $\calG_0$-measurable and the integrand $\phi$ is $\bbG$-predictable. These results are readily available in terms of their existence in the classical Kunita-Watanabe Theorem, while the \emph{explicit} form of $\phi$ is given by means of the NA derivative in \cite{DiNunnoDerivatives} Theorem 3.1 and \cite{DiNunno2} Theorem 3.1.

The paper is organized as follows. In the next section we give a presentation of the framework providing the necessary details for the random measure $\mu$ and the information flows that we are going to use. In Section 3 we prove a sufficient maximum principle and in Section 4 the corresponding necessary maximum principle. Lastly, we show how the results obtained can be applied to characterise the solution in the optimal harvesting problem associated to the dynamics \eqref{X(t) example}.

	\section{The noise and the non-anticipating derivative}
	
	Let us consider a complete probability space $\probspace$ and a time horizon $T<\infty$. We shall consider the noise on the time-space $$\bbX:=[0,T]\times \R:	=\Big([0,T]\times \set{0}\Big)\cup\Big([0,T]\times \R_0\Big),$$ where $\R_0=\R\backslash \set{0}$. The Borel $\sigma$-algebra on $\bbX$ is denoted $\mathcal{B}_\bbX$. Let $\calL$ be the space of the two dimensional stochastic processes $\lambda=(\lambda^B,\lambda^H)$ such that, for each component $k=B,H$, we have that
	\begin{enumerate}
		\item $\lambda_t^k\geq 0 \ P-a.s.$ for all $\t$,
		\item $\lim_{h\rightarrow 0}P\left(|\lambda_{t+h}^k-\lambda_t^k|\geq \epsilon \right)=0$ for all $\epsilon>0$ and almost all $\t$,
		\item $\E\left[\int_0^T \lambda_t^k dt\right]<\infty$.
	\end{enumerate}
	The processes $\lambda\in\calL$ represent the \emph{stochastic time-change rate}. 
	Let $\nu$ be a $\sigma$-finite measure on the Borel sets of $\R_0$ satisfying $\int_{\R_0}z^2\nu(dz)<\infty$.
	We define the random measure $\Lambda$ on $\mathcal{B}_\bbX$ by
	\begin{equation}\label{defLambda}
	\Lambda(\Delta):=\int_0^T\mathds{1}_{\set{(t,0)\in\Delta}}(t)\lambda_t^B dt+\int_0^T \!\!\!\int_{\R_0}\mathds{1}_\Delta(t,z)\nu(dz)\lambda_t^H dt, \quad \Delta \subseteq \bbX.
	\end{equation}
	Furthermore, denote the restrictions of $\Lambda$ to $[0,T]\times\set{0}$ and $[0,T]\times \R_0$ by  $\Lambda^B$ and $\Lambda^H$, respectively.	
	For later use we also introduce the filtration
	\begin{equation*}
	\bbF^\Lambda=\set{\F_t^\Lambda, \t},
	\end{equation*}
	where $\F_t^\Lambda$ is generated by the values of $\Lambda$ on the Borelian sets of $[0,t]\times\R$. Set $\F^\Lambda:=\F_T^\Lambda$.
	We recall the following definitions.
	\begin{defin}\label{definitionBH}
		The \emph{conditional Gaussian measure} (given $\F^\Lambda$) $B$ is a signed random measure on the Borel sets of $[0,T]\times \set{0}$ satisfying
		\begin{itemize}
			\item[A1.] $P\left(B(\Delta)\leq x |\F^\Lambda\right)=P\left(B(\Delta)\leq x |\Lambda^B(\Delta)\right)=\Phi\left(\frac{x}{\sqrt{\Lambda^B(\Delta)}}\right), \newline x \in \R, \Delta\subseteq [0,T]\times \set{0}$. Here $\Phi$ is the cumulative probability distribution function of a standard normal random variable.
			\item[A2.] For all disjoint  $\Delta_1, \Delta_2 \subseteq [0,T]\times \set{0}$, $B(\Delta_1)$ and $B(\Delta_2)$ are conditionally independent given $\F^\Lambda$.
		\end{itemize}
		The \emph{conditional Poisson measure} (given $\F^\Lambda$) $H$ is a random measure on the Borel sets of $[0,T]\times \R_0$ satisfying 
		\begin{itemize}
			\item[A3.] $P\left(H(\Delta)\!=\! k |\F^\Lambda\right)\!=\!P\left(H(\Delta)\!=\!k |\Lambda^H(\Delta)\right)\!=\!\frac{\Lambda^H(\Delta)^k}{k!}e^{-\Lambda^H(\Delta)}$, $k \in \N$, $\Delta\subseteq [0,T]\times \R_0$.
			\item[A4.] For all disjoint  $\Delta_1, \Delta_2 \subseteq [0,T]\times \set{\R_0}$, $H(\Delta_1)$ and $H(\Delta_2)$ are conditionally independent given $\F^\Lambda$.
		\end{itemize}
			Moreover,
	\begin{itemize}
		\item[A5.] $B$ and $H$ are conditionally independent given $\F^\Lambda$.
	\end{itemize}
	Also the\emph{ conditional centered Poisson random measure} is defined  as $$\tH(\Delta):=H(\Delta)-\Lambda^H(\Delta), \quad \Delta \subset \bbX.$$
	Observe that if $\lambda^B$ and $\lambda^H$ were deterministic, then $B$ would be a Gaussian process and $H$ a Poisson random measure. Furthermore, $B$ would be a Wiener process if $\lambda^B\equiv1$ and $H$ a homogeneous Poisson random measure for $\lambda^H\equiv 1$.
		
	\end{defin} 
	\begin{defin}
		We define the signed random measure $\mu$ on the Borel sets $\Delta \subseteq \bbX$ by
		\begin{equation*}
		\mu(\Delta):=B\left(\Delta\cap[0,T]\times\set{0}\right)+\tH\left(\Delta\cap[0,T]\times\R_0\right).
		\end{equation*}
		The random measure $\mu$ has \emph{conditionally independent values}, see \cite{Grigelionis,Serfozo}.
			Observe that (A1) and (A3) yield 
		\begin{equation}\label{martingalemu}
			\E[\mu(\Delta)|\F^\Lambda]=0, \qquad \E[\mu(\Delta)^2|\F^\Lambda]=\Lambda(\Delta) , \quad \Delta \subseteq \bbX.
		\end{equation}
	\end{defin}
	The random measures $B$ and $H$ are related to a time-changed Brownian motion and time-changed pure jump Lévy process. 
		To illustrate, consider the processes on $[0,T]$:
		\begin{align*}
			B_t&:=B([0,t]\times\set{0}), &\Lambda_t^B:=\int_0^t\lambda_s^B ds, \ \ \  \\
			\eta_t&:=\int_0^t\!\!\!\int_{\R_0} z\tH(dsdz),  &\Lambda_t^H:=\int_0^t\lambda_s^H ds, \quad  
		\end{align*} 
		and compute the characteristic functions of $B$ and $\eta$. From (A1) and (A3) we have that
		\begin{align*}
		\E\left[e^{icB_t}\right]&=\int_\R\E\left[e^{icB_t}|\Lambda_t^B=x\right]P_{\Lambda_t^B}(dx)=\int_\R e^{\frac 1 2 c^2 x}P_{\Lambda_t^B}(dx), \quad c\in\R,
		\end{align*}
		 where $P_{\Lambda_t^B}$ is the probability distribution of the time-change  $\Lambda_t^B$. Correspondingly, we have that
		\begin{align*}
		\E\left[e^{ic\eta_t}\right]
		&=\int_\R\exp \left\{\int_{\R_0}[e^{iczx}-1-iczx]\nu(dz)  \right\}P_{{\Lambda}_t^H}(dx), \quad c\in\R,
		\end{align*}
		where $P_{{\Lambda}_t^H}$ is the probability distribution of the time-change ${\Lambda}_t^H$. Indeed we recall the following characterization \cite[Theorem 3.1]{Serfozo} :

	\begin{teo}
		Let $W_t$, $\t$, be a Brownian motion independent of $\Lambda^B$ and $N_t$, $\t$, be a centered pure jump Lévy process with Lévy measure $\nu$ independent of $\Lambda^H$. Then $B$ satisfies (A1)-(A2) if and only if, for any $t\geq 0$, $B_t\stackrel{d}{=}W_{\Lambda_t^B}$ and $\eta$ satisfies (A3)-(A4) if and only if, for any $t\geq 0$, $\eta_t\stackrel{d}{=}N_{{\Lambda}_t^H}$.
	\end{teo}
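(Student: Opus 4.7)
The plan is to prove both equivalences via conditional characteristic functions, handling the Brownian and jump parts separately since conditions (A1)–(A2) involve only $B$ and (A3)–(A4) involve only $H$, with (A5) ensuring the two halves do not interfere.

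For the Gaussian part, I would start from the forward direction. Applying (A1) to $\Delta = (s,t]\times\{0\}$ and its finite disjoint refinements, together with (A2), yields
$$\E\bigl[e^{ic(B_t-B_s)} \,\big|\, \F^\Lambda\bigr] = e^{-c^2(\Lambda_t^B-\Lambda_s^B)/2},$$
and more generally the joint conditional characteristic function of $(B_{t_1},\dots,B_{t_n})$ given $\F^\Lambda$ is the Gaussian one with covariance $\Lambda^B_{t_i\wedge t_j}$. On the other side, since $W$ is independent of $\Lambda^B$ and $\Lambda^B$ is $\F^\Lambda$-measurable, conditioning on $\F^\Lambda$ freezes the time change and reduces $W_{\Lambda^B_t}$ to a Gaussian process with exactly the same conditional covariance. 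Taking $\F^\Lambda$-expectations gives the asserted equality in distribution. For the reverse direction, assuming $(B_t) \stackrel{d}{=}(W_{\Lambda_t^B})$ as processes, the conditional Gaussian structure of $W_{\Lambda^B_t}$ given $\F^\Lambda$ immediately reads off (A1), and the independent increments of $W$ combined with its independence from $\Lambda^B$ yield the conditional independence (A2), first on intervals, then extending to arbitrary disjoint Borel sets by the additivity of $B$.

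For the jump part, the argument is structurally identical but with the Lévy–Khintchine exponent replacing the Gaussian one. Writing $\psi(c):=\int_{\R_0}(e^{icz}-1-icz)\nu(dz)$, a direct computation from (A3) applied to a finite partition of $\Delta$ into rectangles, combined with (A4) for conditional independence across the partition, gives
$$\E\bigl[e^{ic\tH(\Delta)}\,\big|\,\F^\Lambda\bigr] = \exp\bigl\{\Lambda^H(\Delta)\,\psi(c)\bigr\}.$$
This is precisely the conditional characteristic function of $N_{\Lambda^H_t}$ given $\Lambda^H$, obtained by combining the classical Lévy–Khintchine formula for $N$ with the independence of $N$ from $\Lambda^H$. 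The two directions close off as in the Gaussian case: equal conditional characteristic functions imply distributional equality, and conversely the conditional Poisson law of $N_{\Lambda^H_t}$ delivers (A3)–(A4).

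The main obstacle is not the computation of characteristic functions but the bookkeeping of conditional independence: one needs to upgrade the equality of conditional characteristic functions, verified on a family of generating rectangles in $\bbX$, to the full random-measure statements on arbitrary $\Delta\in\mathcal{B}_\bbX$ and arbitrary finite disjoint collections. This is handled by a monotone-class / $\pi$–$\lambda$ argument, exploiting the finite additivity of $B,\widetilde H,\Lambda^B,\Lambda^H$ on disjoint unions together with the conditional independence postulates to propagate the identities from the rectangular generators to the whole Borel $\sigma$-algebra.
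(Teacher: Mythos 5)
First, note that the paper does not prove this statement at all: it is recalled from Serfozo (Theorem 3.1 there), and the characteristic-function computations preceding it in Section 2 are only an illustration, essentially identical to your ``only if'' direction. So the comparison is really with the cited source. Your forward implication (conditional characteristic functions given $\F^\Lambda$, then integrating them out) is sound in spirit, up to one slip: $\E[e^{ic\tH(\Delta)}\mid\F^\Lambda]=\exp\{\Lambda^H(\Delta)(e^{ic}-1-ic)\}$, not $\exp\{\Lambda^H(\Delta)\psi(c)\}$; the Lévy--Khintchine exponent $\psi(c)=\int_{\R_0}(e^{icz}-1-icz)\nu(dz)$ only appears after weighting by $z$, i.e. for $\eta_t=\int_0^t\int_{\R_0}z\,\tH(dsdz)$, where the $z$-integral factors out of $\nu(dz)\lambda^H_s ds$.

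The genuine gap is in the ``if'' direction. Conditions (A1)--(A2) are statements about the conditional law of $B$ given $\F^\Lambda$ on the given probability space, hence about the \emph{joint} law of $(B,\Lambda^B)$, whereas the hypothesis $B_t\stackrel{d}{=}W_{\Lambda_t^B}$ --- even upgraded, as you do, to equality of the two processes in law --- constrains only the law of $B$ alone; the conditional law of $B$ given $\F^\Lambda$ is simply not determined by that. Concretely, if $\Lambda^B$ is nondeterministic and you take $B$ independent of $\F^\Lambda$ with law equal to the unconditional law of $W_{\Lambda^B}$, then $B\stackrel{d}{=}W_{\Lambda^B}$ as processes, yet $P(B(\Delta)\le x\mid\F^\Lambda)=P(B(\Delta)\le x)$ is a variance mixture of Gaussians and (A1) fails. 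So the step ``the conditional Gaussian structure of $W_{\Lambda^B_t}$ given $\F^\Lambda$ immediately reads off (A1)'' transfers a conditional law across a purely distributional identity, which is not legitimate. To close the gap you must interpret and prove the equivalence at the level of the joint law, e.g. $(B,\Lambda^B)\stackrel{d}{=}(W_{\Lambda^B},\Lambda^B)$ with $W$ independent of $\Lambda^B$ (this is how the recalled result is actually formulated and used), after which the monotone-class bookkeeping you describe does the rest; the same remark applies verbatim to the jump part, and for conditioning on the full $\F^\Lambda$ rather than on $\sigma(\Lambda^B)$ you also need $W$ independent of $\F^\Lambda$, not merely of $\Lambda^B$.
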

	
	In the sequel we shall consider two types of information flows. The first one is represented by the filtration
	\begin{equation*}
		\bbF:=\set{\F_t, \ \t},\qquad\F_t:=\bigcap_{r>t}\F_r^\mu,
	\end{equation*}
	where $\bbF^\mu:=\set{\F_t^\mu, \ \t}$ is generated by the values $\mu(\Delta)$,  $\Delta\subset[0,t]\times \R$, $\t$. Correspondingly, let $\bbF^B:=\set{\F_t^B, \t}$ denote the filtration generated by $B(\Delta\cap[0,t]\times\{0\})$, and $\bbF^H:=\set{\F_t^H, \t}$ the filtration generated by $H(\Delta\cap[0,t]\times\R_0)$. We remark that, for any $\t$, $\F_t^\mu=\F_t^B\vee\F_t^H\vee\F_t^\Lambda$. See \cite{DS-Poisson}.

	The second information flow of interest is $$\bbG:=\set{\calG_t, \ \t}, \quad \calG_t:=\F_t^\mu\vee\F^\Lambda.$$ The filtration $\bbG$ is right-continuous, see \cite{DiNunno1}. Moreover we note that $\calG_T=\F_T$, $\calG_0=\F^\Lambda$, and $\F_0$ is substantially trivial. Namely, $\bbG$ includes information on the future values of $\Lambda^B$ and $\Lambda^H$. In the sequel we shall technically exploit the interplay between the two filtrations.

	For $\Delta \subseteq (t,T]\times \R$, the conditional independence in (A2) and (A4), together with \eqref{martingalemu} yield
	\begin{equation}\label{Martingale property}
		\E[\mu(\Delta)|\calG_t]=\E[\mu(\Delta)|\F_t\vee \F^\Lambda]=\E[\mu(\Delta)|\F^\Lambda]=0.
	\end{equation}
	Moreover, (A5) gives us
	\begin{equation*}
		\E[\mu(\Delta_1)\mu(\Delta_2)|\calG_t]=\E[\mu(\Delta_1)|\F^\Lambda]\E[\mu(\Delta_2)|\F^\Lambda]=0,
	\end{equation*}
	for disjoint $\Delta_1,\Delta_2\subseteq (t,T]\times\R$. Hence, $\mu$ is a martingale random field with respect to $\bbG$, see e.g. \cite{DiNunno2} Definition 2.1: 
	\begin{defin}
		A square integrable \emph{martingale random field $\mu$ with conditionally orthogonal values} is a stochastic set function $\mu(\Delta)$, $\Delta\subseteq \bbX$ such that
		\begin{itemize}
			\item $m(\Delta):=\E[\mu(\Delta)^2]=\E[\Lambda(\Delta)], \ \Delta \subseteq \bbX$, defines a variance measure
			\item $\mu$ is $\bbG$-adapted
			\item $\mu$ satisfies the martingale property \eqref{Martingale property}
			\item $\mu$ has conditionally orthogonal values: $			\E[\mu(\Delta_1)\mu(\Delta_2)|\calG_t]=0$, for every disjoint $\Delta_1,\Delta_1\in(t,T]\times \R$.
		\end{itemize}
	It is immediate to see that $\mu$ is also a martingale random field with respect to $\bbF$.
	\end{defin}
	With the above structures, we access the framework of Itô stochastic integration. For this we introduce $\mathcal{I}^\bbG\subseteq L^2(d\Lambda \times dP)$ representing the subspace of the random fields admitting a $\bbG$-predictable modification and $\mathcal{I}^{\bbF}\subset \mathcal{I}^{\bbG}$, the one of $\bbF$-predictable random fields. Observe that, for all $\phi \in\mathcal{I}^\bbG$, we have that
		\begin{equation}\label{QuadVar}
		\E\bigg[\left(\iint_\bbX \phi(s,z) \mu(dsdz)\right)^2\bigg]=\E\bigg[\iint_\bbX \phi(s,z)^2\Lambda(dsdz)\bigg]
		\end{equation}
	thanks to (A5) and the martingale property of $\mu$.
		
		In this work we shall make use of the non-anticipating derivative introduced in \cite{DiNunno2} for martingale random fields.

	\begin{defin}
		The \emph{non-anticipating derivative} (NA-derivative) $\calD$ is a linear operator defined for \emph{all} the elements $\zeta \in L^2(dP)$ as the limit in $L^2(d\Lambda\times dP)$
		\begin{equation}\label{Definition_Derivative}
			\calD \zeta:=\lim_{n\rightarrow\infty}\varphi_n,
		\end{equation}
	of simple $\bbG$-predictable random fields $\varphi_n$, $n\in\N$, defined as:
		\begin{equation*}
			\varphi_n(t,x):=\sum_{k=1}^{K_n}\E\left[\zeta\frac{\mu(\Delta_{nk})}{\E[\Lambda(\Delta_{nk})|\calG_{s_{nk}}]}\middle|\calG_{s_{nk}}\right]\mathds{1}_{\Delta_{nk}}(t,x), \quad (t,x)\in\bbX.
		\end{equation*}
		Here the Borel sets $\Delta_{nk}$ take the form $\Delta_{nk}:=(s_{nk},u_{nk}]\times B_{nk}$, $k=1,...,K_n$, with $0\leq s_{nk}\leq u_{nk}\leq T$, and $B_{nk}\in\mathfrak{B}$ where $\mathfrak{B}$ is any countable semi-ring that generates the Borel $\sigma$-algebra $\mathcal{B}(\R)$. Then $\bigcup_{n\in\N}\bigcup_{k=1}^{K_n}\Delta_{nk}=\bbX$. With a slight abuse of terminology we call the sets $\Delta_{nk}$, $k=1,...,K_n$, a \emph{partition} of $\bbX$ with refinement $n$. Clearly all the sets $\Delta_{nk}$, $k=1,...,K_n$, $n\in\N$ constitute a semiring generating $\mathcal B(\bbX)$ see, e.g. \cite{DiNunno2} and the references therein.
	\end{defin}
	The NA-derivative allows for an explicit integral representation. Namely the integrand is characterized in terms of the inputs: the very random variable to represent, the integrator, and the filtration. See Theorem 3.1 in \cite{DiNunno2}.

	\begin{teo}\label{C-Ok-nonanticipating}
		For any $\xi\in L^2(dP)$ the NA-derivative $\calD \xi$ is well defined and the following stochastic integral representation holds
		\begin{equation}\label{C-O_non_ant}
		\xi=\xi^{0}+\iint_\bbX\calD_{t,z} \xi  \ \mu(dtdz),
		\end{equation}
		where $\xi^{0}=\E\left[\xi|\F^\Lambda\right]$ satisfies $\calD\xi^{0}\equiv 0$. 
	\end{teo}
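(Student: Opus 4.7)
My plan is to reduce the statement to the predictable representation property (Kunita--Watanabe) for the martingale random field $\mu$ with respect to $\bbG$, and then identify the abstract integrand with the limit $\calD\xi$. Since $\mu$ has conditionally orthogonal values given $\calG_0=\F^\Lambda$, and conditionally on $\F^\Lambda$ it is a mixture of a Gaussian and a compensated Poisson random measure enjoying the usual chaos representation, one obtains a $\bbG$-predictable $\phi\in\mathcal I^\bbG$ such that
\begin{equation*}
\xi=\xi^{0}+\iint_{\bbX}\phi(t,z)\,\mu(dtdz),\qquad \E\bigg[\iint_\bbX \phi^2\,d\Lambda\bigg]<\infty,
\end{equation*}
with $\xi^{0}=\E[\xi\mid \F^\Lambda]$ (this uses that the only $\calG_0$-measurable martingale increment on $[0,T]\times\R$ is the initial value, so $\E[\xi\mid \calG_0]=\xi^0$).

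Next I would plug this representation into the defining formula for $\varphi_n$. Because $\xi^{0}$ is $\F^\Lambda\subseteq\calG_{s_{nk}}$-measurable and $\E[\mu(\Delta_{nk})\mid\calG_{s_{nk}}]=0$ by \eqref{Martingale property}, the $\xi^{0}$-part contributes zero. For the stochastic integral part, splitting $\iint\phi\,d\mu$ at $s_{nk}$ kills the $\calG_{s_{nk}}$-measurable piece and, applying the conditional isometry \eqref{QuadVar} (more precisely, its conditional version, which holds because $\mu$ is a $\bbG$-martingale random field with conditionally orthogonal values), the cross term becomes
\begin{equation*}
\E\bigl[\mu(\Delta_{nk})\,\textstyle\iint\phi\,d\mu\,\bigm|\,\calG_{s_{nk}}\bigr]=\E\bigl[\textstyle\iint\mathds 1_{\Delta_{nk}}(t,z)\phi(t,z)\,\Lambda(dtdz)\,\bigm|\,\calG_{s_{nk}}\bigr].
\end{equation*}
Substituting back, $\varphi_n$ coincides with the conditional cell average of $\phi$,
\begin{equation*}
\varphi_n(t,x)=\sum_{k=1}^{K_n}\frac{\E\!\left[\iint \mathds 1_{\Delta_{nk}}\phi\,d\Lambda\,\middle|\,\calG_{s_{nk}}\right]}{\E[\Lambda(\Delta_{nk})\mid\calG_{s_{nk}}]}\,\mathds 1_{\Delta_{nk}}(t,x).
\end{equation*}

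The third step is to show $\varphi_n\to\phi$ in $L^2(d\Lambda\times dP)$. Let $\mathcal P_n$ denote the $\sigma$-algebra on $\Omega\times\bbX$ generated by the rectangles $\Omega_{s_{nk}}\times\Delta_{nk}$, where $\Omega_{s_{nk}}$ ranges over $\calG_{s_{nk}}$. Under the Doléans measure $dP\otimes d\Lambda$, the operator sending $\phi$ to $\varphi_n$ is the conditional expectation onto $\mathcal P_n$. Since the partitions refine $\mathcal B(\bbX)$ and the left-continuous stopping times $s_{nk}$ sweep out the $\bbG$-predictable $\sigma$-algebra, $\{\mathcal P_n\}$ generates the $\bbG$-predictable $\sigma$-algebra on $\bbX$ up to $P\otimes\Lambda$-null sets; $\phi$ being $\bbG$-predictable, martingale convergence in $L^2(dP\otimes d\Lambda)$ gives $\varphi_n\to\phi$. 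This simultaneously proves that the limit in \eqref{Definition_Derivative} exists, is independent of the chosen partition sequence, and equals $\phi$, yielding \eqref{C-O_non_ant}. The assertion $\calD\xi^{0}\equiv 0$ is then immediate: applied to $\zeta=\xi^{0}$, each $\varphi_n$ is identically zero because $\xi^{0}$ factors out of the conditional expectation and multiplies $\E[\mu(\Delta_{nk})\mid\calG_{s_{nk}}]=0$.

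The hard part will be verifying that the sub-$\sigma$-algebras $\mathcal P_n$ indeed fill out the $\bbG$-predictable $\sigma$-algebra on $\bbX$ with the prescribed form of partitions (rectangular cells $(s_{nk},u_{nk}]\times B_{nk}$ drawn from a semiring), and in particular handling the conditioning on $\calG_{s_{nk}}$ rather than on the full predictable $\sigma$-algebra. This is essentially a monotone-class / approximation argument, but it is the technically delicate ingredient because the initial enrichment $\calG_0=\F^\Lambda$ carries future-in-time information on $\Lambda$, so predictability has to be tested jointly with the spatial partition, and one must be careful that the semiring $\mathfrak B$ generates $\mathcal B(\R)$ in such a way that rectangles of the form $\Omega_{s_{nk}}\times\Delta_{nk}$ actually generate the $\bbG$-predictable $\sigma$-algebra.
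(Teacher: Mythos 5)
This theorem is not proved in the paper at all: it is imported from the cited literature (Theorem 3.1 in \cite{DiNunnoDerivatives} and Theorem 3.1 in \cite{DiNunno2}, with the representation property in the present time-changed setting relying on \cite{DiNunno1}), so there is no in-paper argument to compare with. Your proposal essentially reconstructs that proof: viewing $\varphi_n$ as the conditional expectation of the Kunita--Watanabe integrand under the Dol\'eans measure $dP\otimes d\Lambda$ with respect to the $\sigma$-algebra $\mathcal P_n$ generated by the cells $A\times\Delta_{nk}$, $A\in\calG_{s_{nk}}$, is the same device as the orthogonal projection of $\xi$ onto the subspaces of integrals of simple predictable fields, transported through the isometry \eqref{QuadVar}; your computation of $\varphi_n$ via the conditional isometry and your one-line argument for $\calD\xi^{0}\equiv 0$ are correct as written.

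Two points need to be firmed up before this is a complete proof. First, step 1 is not just ``Kunita--Watanabe'': that theorem only yields $\xi=\E[\xi|\calG_0]+\iint_\bbX\phi\,d\mu+N$ with $N$ orthogonal to all stochastic integrals driven by $\mu$, and the whole content of \eqref{C-O_non_ant} is that $N\equiv 0$, i.e.\ the predictable representation property of $\mu$ with respect to $\bbG$. This is the only place where the structural assumptions (A1)--(A5) really enter (conditionally on $\F^\Lambda$ the noise is Gaussian/Poisson and $\calG_T=\F_T^\mu\vee\F^\Lambda$, so a conditional chaos or density-of-exponentials argument applies, as carried out in \cite{DiNunno1}); asserting it in one sentence leaves the decisive ingredient unproven. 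Second, in step 3 the martingale-convergence argument requires the partition sequence to be nested, so that the $\sigma$-algebras $\mathcal P_n$ increase, together with the monotone-class identification of $\bigvee_n\mathcal P_n$ with the $\bbG$-predictable $\sigma$-algebra on $\Omega\times\bbX$ up to $dP\otimes d\Lambda$-null sets (plus the harmless convention $0/0=0$ on cells where $\E[\Lambda(\Delta_{nk})|\calG_{s_{nk}}]=0$); you honestly flag this as the hard part, and it is indeed where the remaining work lies. With these two ingredients supplied, your argument proves the theorem, and the independence of the limit \eqref{Definition_Derivative} from the chosen partition sequence follows, as you say, from the $dP\otimes d\Lambda$-a.e.\ uniqueness of the integrand.
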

	\noindent The existence and unicity of a stochastic integral representation is well-known from the Kunita-Watanabe Theorem. Theorem \ref{C-Ok-nonanticipating} provides an explicit representation to the integrand. The spirit of this result is in line with representations à la Clark-Haussman-Ocone (CHO), see, e.g. \cite{DOP-book}. However in that case the noise is either a Brownian motion or a centered Poisson random measure and the integrand is characterized in terms of the Malliavin derivative. We remark that an extension of the Malliavin calculus and CHO representations to the conditional Brownian and the conditional Poisson cases is provided in \cite{Yablonski} and \cite{DiNunno1}. When applying Malliavin calculus to optimal control, the domain of the Malliavin derivative constitutes a serious restriction as the variables depend on a control yet to be found. In \cite{Oksendal2} this was overcome for the Brownian and centered Poisson cases by using the Hida-Malliavin extension which is an extension of Malliavin calculus to the white noise framework (stochastic distributions), see \cite{DOP-book}. At present there is no such an extension for time-changed noises hence the method cannot be used. In this paper we suggest to use the NA-derivative, which has no restrictions on the domain and it is well defined for all martingales in $L^2(dP)$ as integrators. Furthermore, from Theorem \ref{C-Ok-nonanticipating} we can see that $\calD$ is actually the dual of the Itô integral:
	
	\begin{prop}\label{Yab} For all $\phi$ in $\mathcal{I}^{\bbG}$ and all $\xi$ in $L^2(dP)$, we have
		\begin{equation*}
		\E\left[\xi\iint_\bbX\phi(t,z)\mu(dtdz)\right]=\E\left[\iint_\bbX q(t,z)\calD_{t,z}\xi \ \Lambda(dtdz)\right].
		\end{equation*}
	\end{prop}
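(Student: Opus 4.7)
The plan is to use Theorem \ref{C-Ok-nonanticipating} applied to $\xi$ to reduce the left-hand side to an inner product of two stochastic integrals against $\mu$, and then invoke the Itô isometry \eqref{QuadVar} in its polarized form to recognize the right-hand side.

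First, I would apply Theorem \ref{C-Ok-nonanticipating} to write
\begin{equation*}
\xi = \xi^{0} + \iint_\bbX \calD_{t,z}\xi \ \mu(dtdz),
\end{equation*}
with $\xi^{0} = \E[\xi|\F^\Lambda]$ being $\calG_0$-measurable. Note that by definition of $\calD$, the field $\calD\xi$ lies in $\mathcal{I}^{\bbG}$, since it is the $L^2(d\Lambda \times dP)$-limit of simple $\bbG$-predictable fields. Multiplying both sides by $\iint_\bbX \phi(t,z)\mu(dtdz)$ and taking expectation, I split the result into two pieces.

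For the first piece, involving $\xi^{0} \iint_\bbX \phi \,d\mu$, I would condition on $\calG_0 \supseteq \F^\Lambda$ and use the $\bbG$-martingale property of $\mu$ (see \eqref{Martingale property}) to conclude
\begin{equation*}
\E\left[\xi^{0}\iint_\bbX \phi(t,z)\mu(dtdz)\right] = \E\left[\xi^{0}\,\E\left[\iint_\bbX \phi(t,z)\mu(dtdz)\Big|\calG_0\right]\right] = 0,
\end{equation*}
since the stochastic integral is a $\bbG$-martingale starting from $0$ and $\phi \in \mathcal{I}^{\bbG}$. For the second piece, I would apply the polarization identity associated to the Itô isometry \eqref{QuadVar}: for any two integrands $\phi_1,\phi_2\in\mathcal{I}^{\bbG}$,
\begin{equation*}
\E\left[\left(\iint_\bbX \phi_1\,d\mu\right)\left(\iint_\bbX \phi_2\,d\mu\right)\right] = \E\left[\iint_\bbX \phi_1(t,z)\phi_2(t,z)\,\Lambda(dtdz)\right].
\end{equation*}
Taking $\phi_1 = \calD\xi$ and $\phi_2 = \phi$ gives the claimed identity.

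The only delicate point is verifying that the polarized isometry genuinely applies with integrand $\calD\xi$; this is exactly where the fact that the NA-derivative is built as a $\bbG$-predictable $L^2(d\Lambda \times dP)$-limit becomes essential, and it ensures all manipulations remain within $\mathcal{I}^{\bbG}$. Beyond this membership check, the computation is a direct consequence of Theorem \ref{C-Ok-nonanticipating} combined with \eqref{QuadVar}.
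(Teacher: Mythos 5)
Your proof is correct and follows essentially the same route the paper intends: Proposition \ref{Yab} is presented there as a direct consequence of Theorem \ref{C-Ok-nonanticipating}, i.e.\ precisely the representation $\xi=\xi^{0}+\iint_\bbX\calD_{t,z}\xi\,\mu(dtdz)$ combined with the vanishing of the $\calG_0$-measurable term and the polarized isometry \eqref{QuadVar}, with $\calD\xi\in\mathcal{I}^{\bbG}$ guaranteed by its construction as an $L^2(d\Lambda\times dP)$-limit of simple $\bbG$-predictable fields. (Note only that the $q(t,z)$ appearing on the right-hand side of the statement is a typo for $\phi(t,z)$, which is the version you proved.)
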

	\noindent Also we have the martingale representation theorem:
	\begin{teo}\label{martingalerepresentation}
	For any square integrable $\bbG$ martingale, $M(t), \t,$ the following representation holds true
	\begin{equation*}
	M(t)=\E[M(T)|\F^\Lambda]+\int_0^t\!\!\!\int_\R\calD_{s,z}M(T)\mu(dsdz).
	\end{equation*}
	\end{teo}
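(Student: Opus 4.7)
The plan is to deduce the martingale representation theorem directly from the Clark--Ocone type representation in Theorem \ref{C-Ok-nonanticipating}, applied to the terminal random variable $M(T)$, combined with the $\bbG$-martingale property of $M$. Since $M(T) \in L^2(dP)$ by assumption, Theorem \ref{C-Ok-nonanticipating} gives
\begin{equation*}
M(T) = \E[M(T) \mid \F^\Lambda] + \iint_\bbX \calD_{s,z} M(T)\, \mu(dsdz),
\end{equation*}
with the integrand in $\mathcal{I}^\bbG$, so the stochastic integral is a genuine $\bbG$-martingale thanks to \eqref{QuadVar} and the martingale property \eqref{Martingale property}.

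Next, I would exploit the $\bbG$-martingale property $M(t) = \E[M(T) \mid \calG_t]$ by taking conditional expectation with respect to $\calG_t$ on both sides of the representation above. For the first term, since $\F^\Lambda = \calG_0 \subseteq \calG_t$, we have $\E\bigl[\E[M(T)\mid\F^\Lambda]\bigm|\calG_t\bigr] = \E[M(T)\mid\F^\Lambda]$. For the second term, the Itô integral $N(t) := \int_0^t \int_\R \calD_{s,z} M(T)\, \mu(dsdz)$ is a $\bbG$-martingale, and thus
\begin{equation*}
\E\left[\iint_\bbX \calD_{s,z} M(T)\, \mu(dsdz) \,\middle|\, \calG_t\right] = N(t) = \int_0^t\!\!\!\int_\R \calD_{s,z} M(T)\, \mu(dsdz).
\end{equation*}
Combining these two identities yields the claimed formula.

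I do not expect a serious obstacle here: everything is a straightforward consequence of Theorem \ref{C-Ok-nonanticipating}, the tower property, and the martingale property of stochastic integrals against $\mu$ under $\bbG$. The only point that deserves a brief check is that $\calD M(T) \in \mathcal{I}^\bbG$, which is exactly what Theorem \ref{C-Ok-nonanticipating} guarantees (the integrand is $\bbG$-predictable and square integrable with respect to $d\Lambda \times dP$), so that $N$ is well defined as a $\bbG$-martingale and the conditioning step is legitimate.
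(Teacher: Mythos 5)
Your proof is correct and follows exactly the route the paper intends: Theorem \ref{martingalerepresentation} is stated as an immediate consequence of the representation in Theorem \ref{C-Ok-nonanticipating} applied to $M(T)$, with the conditioning on $\calG_t$ handled via the tower property (since $\F^\Lambda=\calG_0\subseteq\calG_t$) and the $\bbG$-martingale property of the It\^o integral against $\mu$. No gaps; the check that $\calD M(T)\in\mathcal{I}^\bbG$ is indeed supplied by the definition of the NA-derivative and Theorem \ref{C-Ok-nonanticipating}.
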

	For future use we also introduce the space $S$ of the $\bbG$-adapted stochastic processes $p(t,\omega)$, $\t$, $\omega \in \Omega$ such that
	\begin{equation*}
	    \|p\|_S:=\E\left[\sup_{0\le t \le T}|p(t)|^2\right]^{1/2}<\infty.
	\end{equation*}

\section{A sufficient maximum principle with time-change}
\label{SecSuff}
	We are now ready to study the optimization problem \eqref{J(u) wrt F} with performance functional
	\begin{equation}\label{performancefunctionalG}
	J(u)=\E\left[\int_0^T F(t,\lambda_t,u(t),X^u(t))dt+G(X^u(T))\right],
	\end{equation} 
	where
	\begin{align*}
		F&:[0,T]\times [0,\infty)^2\times \calU\times \R\times \Omega \longrightarrow \R,\\
		G&:\R\times \Omega \longrightarrow \R,
	\end{align*}
	with $\calU$ a closed convex subset of $\R$. For all $\lambda \in [0,\infty)^2$, $u\in\calU$, $x\in\R$ the process $F(\cdot,\lambda,u,x,\cdot)$ is $\bbF$-adapted and the mapping $F(t,\lambda,u,x)$ is $C^1$ in $x$ $P$-a.s. uniformly w.r.t. $\t$, $\lambda \in [0,\infty)^2$, $u\in\calU$. Also for all $x\in\R$, $G(x,\cdot)$ is $\F_T$-measurable and $G$ is $C^1$ in $x$ $P$-a.s. uniformly w.r.t. $\t$, $\lambda \in [0,\infty)^2$, $u\in\calU$. The controlled dynamics of $X$ are given by the equation
	\begin{equation}\label{dXt}
	X^u(t)=X_0+\int_0^t b(t,s,\lambda_s,u(s),X^u(s\shortminus))ds +\int_0^t\!\!\!\int_{\R} \kappa(t,s,z,\lambda_s,u(s),X^u(s\shortminus))\mu(dsdz),
	\end{equation}
	where $X_0\in\R$ and the coefficients are given by the mappings
	\begin{align*}
	b&:[0,T]\times[0,T]\times[0,\infty)^2\times\calU\times\R\times \Omega\longrightarrow\R, \\
	\kappa&:[0,T]\times[0,T]\times\R\times[0,\infty)^2\times\calU\times\R\times \Omega\longrightarrow \R.
	\end{align*}
	We assume $b(t,\cdot,\lambda,u,x,\cdot)$ and $\kappa(t,\cdot,z,\lambda,u,x,\cdot)$ to be $\bbF$-predictable for all $\t,\lambda\in[0,\infty)^2,u\in \calU,x\in\R$ and $z\in \R$. We also require them to be $C^2$ with respect to $t$ and to $x$ with partial derivatives $L^2$-integrable with respect to $dt\times dP$ and $d\Lambda\times dP$, respectively. Notice also that we will often drop the superscript $u$ when it is clear the dependence of $X$ on $u$.

 Later on we can see the coefficients $b$ and $\kappa$ in a functional setup:
\begin{align*}
	b&:[0,T]\times[0,T]\times\Xi_{\R^2_+}\times \Xi_{\calU}\times\Xi_\R\times \Omega\longrightarrow\R, \\
	\kappa&:[0,T]\times[0,T]\times \R \times\Xi_{\R^2_+}\times \Xi_{\calU}\times\Xi_\R\times \Omega\longrightarrow\R,
	\end{align*}
 where we denoted by $\Xi_S$ the space of measurable function on $[0,T]$ with values in $S$. Then we can interpret the coefficients in \eqref{dXt} via the evaluation at the point $s\in[0,T]$:
 \begin{align*}
     b(t,\cdot,\lambda_\cdot, u(\cdot), X^u(\cdot))(s) &= b(t,s,\lambda_s,u(s),X^u(s\shortminus))\\
    \kappa(t,\cdot,z,\lambda_\cdot, u(\cdot), X^u(\cdot))(s) &= \kappa(t,s,z,\lambda_s,u(s),X^u(s\shortminus)).
 \end{align*}
 We assume that $b$ and $\kappa$ are Fréchet differentiable (in the standard topology of càdlàg paths) with $C^2$ regularity in $t$, $x$ and $u$ (with the corresponding derivatives).

	In the sequel we assume existence and uniqueness of a solution for \eqref{dXt}. Sufficient conditions for this are provided in the next result, which is in line with the study in \cite{Oksendal_Forward}, though there the driving noises are the Brownian motion and Poisson random measure.
	
	\begin{teo}\label{Esistenza Forward}
		Assume that:
		\begin{enumerate}
			\item $b(t,\cdot,\lambda,u,x,\cdot)$ and $\kappa(t,\cdot,z,\lambda,u,x,\cdot)$ are $\bbF$-predictable for all $\t,z\in \R,\lambda\in[0,\infty)^2,u\in \calU$ and $x\in\R$.
			\item $b(t,s,\lambda,u,\cdot)$ and $\kappa(t,s,\cdot,\lambda,u,\cdot)$ are Lipschitz continuous with respect to $x$, uniformly in $t,s\in[0,T]^2$, $u\in\calU$, $\lambda\in[0,\infty)^2$, i.e., for all $x_1,x_2\in\R$,
			\begin{align*}
				&|b(t,s,\lambda,u,x_1)-b(t,s,\lambda,u,x_2)|+|\kappa(t,s,0,\lambda,u,x_1)-\kappa(t,s,0,\lambda,u,x_2)|\sqrt{\lambda^B}\\&+\left(\int_{\R_0}|\kappa(t,s,z,\lambda,u,x_1)-\kappa(t,s,z,\lambda,u,x_2)|^2\nu(dz)\right)^{1/2}\sqrt{\lambda^H}\leq C|x_1-x_2|, \quad P-a.s.
			\end{align*}
			\item $b(t,s,\lambda,u,\cdot)$ and $\kappa(t,s,z,\lambda,u,\cdot)$ have linear growth with respect to $x$, i.e., for all $t,s\in[0,T]^2$, $u\in\calU$, $\lambda\in[0,\infty)^2$, $x\in\R$, we have
			\begin{align*}
				|b(t,s,\lambda,u,x)|&+|\kappa(t,s,0,\lambda,u,x)|\sqrt{\lambda^B} \\&+\left(\int_{\R_0} |\kappa(t,s,z,\lambda,u,x)|^2\nu(dz)\right)^{1/2}\sqrt{\lambda^H}\leq C(1+|x|) \quad P-a.s.
			\end{align*}
		\end{enumerate}
	Then there exists a unique $\bbF$-adapted solution to \eqref{dXt} in $L^2(dt\times dP)$. 
	\end{teo}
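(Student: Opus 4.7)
The plan is to run a Picard iteration in the Banach space $\calH := L^2_{\bbF}([0,T]\times\Omega)$ of $\bbF$-predictable square-integrable processes. I would set $X^{(0)}(t)\equiv X_0$ and recursively
\begin{equation*}
X^{(n+1)}(t) := X_0 + \int_0^t b(t,s,\lambda_s,u(s),X^{(n)}(s\shortminus))ds + \int_0^t\!\!\!\int_\R \kappa(t,s,z,\lambda_s,u(s),X^{(n)}(s\shortminus))\mu(dsdz).
\end{equation*}
Assumption (1), together with the $\bbF$-adaptedness of $X^{(n)}$ and the $\bbF$-martingale property of $\mu$, makes each integral meaningful. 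Applying Cauchy--Schwarz to the drift and the Itô isometry \eqref{QuadVar} to the stochastic integral, combined with the linear growth condition (3) and the explicit form \eqref{defLambda} of $\Lambda$, would yield
\begin{equation*}
\E\bigl[|X^{(n+1)}(t)|^2\bigr] \leq C_1 + C_2 \int_0^t \E\bigl[|X^{(n)}(s)|^2\bigr] ds,
\end{equation*}
with constants depending only on $C$, $T$, $X_0$ and the moment bound $\E\int_0^T(\lambda^B_s+\lambda^H_s)ds < \infty$. Induction on $n$ plus a Volterra--Gronwall argument then give $\sup_n \sup_{t\le T}\E[|X^{(n)}(t)|^2] < \infty$, so all iterates lie in $\calH$.

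The core step is a Cauchy estimate. Setting $a_n(t) := \E|X^{(n+1)}(t)-X^{(n)}(t)|^2$ and subtracting consecutive iterates, the same two ingredients---Cauchy--Schwarz on the drift and \eqref{QuadVar} on the stochastic integral---but now combined with the Lipschitz hypothesis (2) (which precisely absorbs the $\sqrt{\lambda^B}$ and $\sqrt{\lambda^H}$ weights coming from $\Lambda$) would produce
\begin{equation*}
a_n(t) \leq K\int_0^t a_{n-1}(s)\,ds, \qquad t\in[0,T], \quad K := 2(T+1)C^2.
\end{equation*}
Iterating this Volterra inequality from a uniform bound on $a_0$ gives $a_n(t) \leq \mathrm{const}\cdot (Kt)^n/n!$, whence $\sum_n \|X^{(n+1)}-X^{(n)}\|_{\calH} < \infty$ and $(X^{(n)})$ converges in $\calH$ to some limit $X$. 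The same Lipschitz estimate shows that the map sending $Y\in\calH$ to the right-hand side of \eqref{dXt} is continuous from $\calH$ into itself, so passing to the limit in the recursion identifies $X$ as a solution. For uniqueness, applying the very same estimate to two putative solutions $X,Y\in\calH$ gives $\E|X(t)-Y(t)|^2 \le K\int_0^t \E|X(s)-Y(s)|^2\,ds$, and Gronwall's inequality yields $X=Y$.

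The only genuinely delicate points are two. First, the Volterra structure---the kernels $b(t,s,\cdot)$ and $\kappa(t,s,z,\cdot)$ depending on the \emph{outer} variable $t$---prevents the classical fixed-point argument on short intervals with a $\sup_t\E|\cdot|^2$ norm, because restricting time to $[0,\tau]$ still leaves the kernels evaluated there depending on all of $[0,T]$ through their first argument. The remedy is to work in $\calH$ from the outset and absorb the extra factor $T$ coming from Cauchy--Schwarz into the iterated inequality. Second, since $\mu$ has only \emph{conditionally} independent values, the classical Brownian/Poisson Itô isometry is not directly applicable; one must use \eqref{QuadVar}, which is the corresponding identity for the martingale random field $\mu$ against $\bbF$-predictable integrands and has already been established in the preceding section.
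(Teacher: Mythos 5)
Your proposal is correct and follows essentially the same route as the paper: a Picard iteration whose contraction estimate comes from Cauchy--Schwarz on the drift, the isometry \eqref{QuadVar} on the $\mu$-integral, and the Lipschitz condition (which is tailored to absorb the $\lambda^B$, $\lambda^H$ weights in \eqref{defLambda}), with the linear growth condition bounding the first increment, factorial decay giving convergence in $L^2$, and a Gronwall argument for uniqueness. Your added remarks on why one works in $L^2(dt\times dP)$ rather than localizing in time, and on using \eqref{QuadVar} in place of the classical It\^o isometry, are consistent with (and slightly more explicit than) the paper's own sketch.
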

\begin{proof}
	The proof follows a classical Picard iteration scheme.  Here we provide the main ideas.	Fix $u\in\calA^\bbF$ and define inductively
	\begin{align*}
		X^0(t)&:=X_0\\
		X^n(t)&:=X_0+\int_0^tb(t,s,\lambda_s,u(s),X^{n-1}(s))ds\\
		&\quad +\int_0^t\!\!\!\int_\R\kappa(t,s,z,\lambda_s,u(s),X^{n-1}(s\shortminus))\mu(dsdz), \qquad \t, \ n\geq1
	\end{align*}
	Then, for all $\t$ and for all $n\geq 1$, we have the following estimate
	\begin{align*}
		\E&\left[|X^{n+1}(t)-X^n(t)|^2\right]\leq 2\E\left[t\int_0^t|b(t,s,\lambda_s,u(s),X^{n}(s\shortminus))-b(t,s,\lambda_s,u(s),X^{n-1}(s\shortminus))|^2ds\right]\\
		&+2\E\left[\int_0^t\!\!\!\int_\R|\kappa(t,s,z,\lambda_s,u(s),X^{n}(s\shortminus))-\kappa(t,s,z,\lambda_s,u(s),X^{n-1}(s\shortminus))|^2\Lambda(dsdz)\right].
	\end{align*}
	By \eqref{defLambda} and using the Lipschitz condition on $b$ and $\kappa$, we get
	\begin{equation*}
		\E\left[|X^{n+1}(t)-X^n(t)|^2\right]\leq 2 C^2 \E\left[t\int_0^t2|X^n(s\shortminus)-X^{n-1}(s\shortminus)|ds\right],
	\end{equation*}
	which leads to
	\begin{equation}\label{FORWARD(stima 1)}
		\E\left[|X^{n+1}(t)-X^n(t)|^2\right]\leq K\E\left[\int_0^t|X^{n}(s\shortminus)-X^{n-1}(s\shortminus)|^2ds\right],
	\end{equation}
	 for $K:=4TC^2$. Also, by the linear growth condition \textit{3.} on $b$ and $\kappa$, we get that
	\begin{equation}\label{FORWARD(stima 2)}
		\E\left[|X^1(t)-X^0(t)|\right]\leq Kt(1+X_0)^2.
	\end{equation}
	Combining now \eqref{FORWARD(stima 1)} and \eqref{FORWARD(stima 2)}, we have that
	\begin{equation*}
		\E\left[|X^{n+1}(t)-X^n(t)|^2\right]\leq \frac{2(1+X_0)^2(Kt)^{n+1}}{(n+1)!}.
	\end{equation*}
Thus we have that $\{X(t)^n\}_{n=1}^{\infty}$ is a Cauchy sequence in $L^2(dP)$ and $\{X(t)^n\}_{n=1}^{\infty}$ is in $L^2(dP\times dt)$ 
Taking the limit on $n\rightarrow\infty$  gives the solution to \eqref{dXt}.
	 The uniqueness is obtained by standard arguments and estimates similar to the ones above.
\end{proof}

    	Before moving forward, we need to state a fundamental result that will allow us to rewrite $X$ in \eqref{dXt} in differential form. This is due to \cite{Protter} and it is known as transformation rule. Hereafter we state the result within our setting.
    	
    	\begin{lema}(Transformation rule)\label{Transformation rule} 	Assume that for all $z\in\R$, $\lambda \in [0,\infty)^2$, $u\in\calU$, $x\in\R$ the partial derivative of $\kappa$ with respect to $t$ (denoted with $\partial_t \kappa(t,s,z,\lambda,u,x)$) is locally bounded (uniformly in $t$) and satisfies
	\begin{equation}\label{lipschitz_partial_t}
	    |\partial_t\kappa(t_1,s,z,\lambda,u,x)- \partial_t \kappa(t_2,s,z,\lambda,u,x)|\leq K |t_1-t_2|,
	\end{equation}
	for some $K>0$ and for each fixed $s\leq t$, $\lambda \in [0,\infty)^2$, $u\in\calU$, $x\in\R$.
    	
    Then, the forward equation \eqref{dXt} can be rewritten in differential notation as 
    \begin{align}
        dX(t) &= \Big(b(t,t,\lambda_t,u(t),X(t)) + \int_0^t\partial_t b(t,s, \lambda_s,u(s),X(s))ds \nonumber \\
        &\quad+ \int_0^t\!\!\!\int_\R\partial_t\kappa(t,s,z,\lambda_s,u(s),X(s))\mu(dsdz)\Big)dt + \int_\R\kappa(t,t,z, \lambda_t,u(t),X(t))\mu(dtdz)\label{functionalSDE_orig}.
    \end{align}
    \end{lema}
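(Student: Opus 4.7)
The plan is to reduce the statement to a reformulation via the fundamental theorem of calculus followed by a stochastic Fubini exchange. Rather than differentiating $t\mapsto X(t)$ in $L^2$ and matching an infinitesimal decomposition, I would verify the integrated form of \eqref{functionalSDE_orig} directly, which avoids any limit argument in $L^2(dP)$.

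Since $b$ and $\kappa$ are $C^1$ in their first argument, for every $0\le s\le t$ we have
$$b(t,s,\ldots)=b(s,s,\ldots)+\int_s^t\partial_r b(r,s,\ldots)\,dr,\qquad \kappa(t,s,z,\ldots)=\kappa(s,s,z,\ldots)+\int_s^t\partial_r \kappa(r,s,z,\ldots)\,dr.$$
Substituting these identities into \eqref{dXt}, applying the classical Fubini theorem to the $ds\,dr$-integral and a stochastic Fubini theorem to the $\mu(dsdz)\,dr$-integral leads to
\begin{align*}
X(t) &= X_0 + \int_0^t b(s,s,\lambda_s,u(s),X(s\shortminus))\,ds + \int_0^t\!\!\!\int_\R \kappa(s,s,z,\lambda_s,u(s),X(s\shortminus))\,\mu(dsdz)\\
&\quad+ \int_0^t\!\!\left[\int_0^r \partial_r b(r,s,\lambda_s,u(s),X(s\shortminus))\,ds + \int_0^r\!\!\!\int_\R \partial_r \kappa(r,s,z,\lambda_s,u(s),X(s\shortminus))\,\mu(dsdz)\right]dr,
\end{align*}
which is exactly the integrated form of \eqref{functionalSDE_orig}; renaming the outer dummy variable as $t$ and reading the integrand as the drift coefficient completes the identification.

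The main technical step is the stochastic Fubini exchange between $dr$ and $\mu(dsdz)$. To apply it one verifies the $L^2$-bound
$$\E\!\left[\int_0^t\!\!\int_s^t\!\!\int_\R|\partial_r\kappa(r,s,z,\lambda_s,u(s),X(s\shortminus))|^2\,dr\,\Lambda(dsdz)\right]<\infty,$$
which follows from the local boundedness of $\partial_t\kappa$ (uniform in $t$) together with $X\in L^2(dt\times dP)$ from Theorem \ref{Esistenza Forward}. The Lipschitz hypothesis \eqref{lipschitz_partial_t} then supplies the joint measurability in $(r,s,z,\omega)$ and the $L^2$-continuity of $r\mapsto \int_0^r\!\int_\R \partial_r\kappa(r,s,z,\ldots)\mu(dsdz)$ required to invoke a Protter-type stochastic Fubini theorem (cf.\ \cite{Protter}). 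Once that exchange is legitimate, the rest of the computation is purely mechanical.
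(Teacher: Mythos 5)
Your proposal is correct and follows essentially the same route as the paper: rewrite the kernels via the fundamental theorem of calculus in the first time argument, then interchange the $dr$-integral with the $\mu(dsdz)$-integral by a stochastic Fubini theorem (Jacod/Protter), the hypotheses \eqref{lipschitz_partial_t} and the standing $L^2$-integrability of the partial derivatives justifying the exchange. The only cosmetic difference is that you treat the $b$-term explicitly by the same decomposition, which the paper leaves to the classical Fubini theorem.
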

    \begin{proof}
        The proof follows the one in \cite{Protter}. We report it here for completeness. Observe that
        \begin{align*}
            X(t) &= \int_0^t b(t,s,\lambda_s,u(s),X(s)) ds + \int_0^t\!\!\!\int_\R\kappa(t,s,z,\lambda_s,u(s),X(s))\mu(dsdz)\\
            &=\int_0^tb(t,s,\lambda_s,u(s),X(s))ds +\int_0^t\!\!\!\int_\R\kappa(s,s,z,\lambda_s,u(s),X(s)) \mu(dsdz) \\&\quad + \int_0^t\!\!\!\int_\R \kappa(t,s,z,\lambda_s,u(s),X(s))-\kappa(s,s,z,\lambda_s,u(s),X(s))\mu(dsdz) 
        \end{align*}
        Note that 
        \begin{align*}
            \kappa(t,s,z,\lambda_s,u(s),X(s))-\kappa(s,s,z,\lambda_s,u(s),X(s)) 
            &= \int_s^t\partial_r\kappa(r,s,z, \lambda_s, u(s), X(s))dr 
            \\&= \int_0^t\mathds{1}_{s\le r}\partial_r \kappa(r,s,z,\lambda_s,u(s),X(s)) dr
        \end{align*} Then we can apply the Fubini theorem for stochastic integration as in \cite{Jacod} and we obtain that
        \begin{align*}
            \int_0^t\!\!\!\int_\R &\kappa(t,s,z,\lambda_s,u(s),X(s))-\kappa(s,s,z,\lambda_s,u(s),X(s))\mu(dsdz) \\&= \int_0^t\!\!\!\int_\R\left\{\int_0^t\mathds{1}_{s\leq r}\partial_r \kappa(r,s,z,\lambda_s,u(s),X(s)) dr\right\}\mu(dsdz)\\
            &=\int_0^t\left\{\int_0^r\!\!\!\int_\R\partial_r\kappa(r,s,z,\lambda_s,u(s),X(s))\mu(dsdz)\right\}dr.
        \end{align*}
        The well posedness and the Lebesgue integrability of $\int_0^r\!\int_\R\partial_r\kappa(r,s,z,\lambda_s,u(s),X(s))\mu(dsdz)$, $r\in[0,t]$ is achieved in Theorem 3.2 \cite{Protter} thanks to \eqref{lipschitz_partial_t}.
    \end{proof}
	
	\begin{obs} \label{obs FSDEs}
 \emph{(A link with functional SDEs)}
	Lemma \ref{Transformation rule}, suggests a link between the Volterra integral equations of the kind \eqref{dXt} and functional SDEs (FSDEs). It is in fact clear that, by defining
	\begin{align*}
	    B(t,\lambda_\cdot, u_\cdot, X_\cdot, Z_\cdot) &:= \Big(b(t,t,\lambda_t,u(t),X(t)) + \int_0^t\partial_t b(t,s, \lambda_s,u(s),X(s))ds+ Z(t)\Big),
	\end{align*}
	where
	\begin{equation*}
	    Z(t) = \int_0^t\!\!\!\int_\R\partial_t\kappa(t,s,z,\lambda_s,u(s),X(s))\mu(dsdz),
	\end{equation*}
	we have that \eqref{dXt} can be rewritten as
	\begin{equation}\label{functionalSDE}
	X(t) = X_0+\int_0^t B(t,\lambda_\cdot, u_\cdot, X_\cdot, Z_\cdot) dt + \int_0^t\!\!\!\int_\R\kappa(t,t,z,\lambda_t,u(t),X(t))\mu(dtdz).
	\end{equation}
	We notice that \eqref{functionalSDE} is a functional SDE, so we could have tried to state an existence result for functional SDEs instead of using Theorem \ref{Esistenza Forward}.  Some existence results for SDEs such as \eqref{functionalSDE} are available (see e.g. \cite{Cont, PossamaiSDE, soluzioneSDE,DiNunnoBanos, OksendalDhal}), but no one of those deals with noises such as $\mu$. While some of those results (e.g. \cite{Cont, PossamaiSDE, soluzioneSDE}) present condition that would be too restrictive for the current setting, we also point out that the results presented in \cite{OksendalDhal, DiNunnoBanos} could possibly be extended to the current framework. Nonetheless, this would require to impose some Lipschitz and linear growth conditions on $b$ and $\kappa$ (like in Theorem \ref{Esistenza Forward}) and, additionally, to impose a Lipschitzianity condition on $\partial_t b$, not required in the hypothesis of Theorem \ref{Esistenza Forward}.
	\end{obs}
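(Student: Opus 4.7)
The plan is to split each Volterra integral into a "diagonal" piece (with $s=t$) plus a correction that captures the $t$-dependence of the kernels, write the correction as an integral from $s$ to $t$ of the partial derivative in the first argument, and then exchange the order of integration by Fubini (deterministic Fubini for the $b$ term and stochastic Fubini for the $\kappa$ term). Differentiating the resulting expression in $t$ will give the claimed differential form.

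Concretely, I would proceed as follows. First, add and subtract the diagonal in each integrand:
\begin{align*}
\int_0^t b(t,s,\cdot)ds &= \int_0^t b(s,s,\cdot)ds + \int_0^t \bigl[b(t,s,\cdot)-b(s,s,\cdot)\bigr]ds,\\
\int_0^t\!\!\!\int_\R \kappa(t,s,z,\cdot)\mu(dsdz) &= \int_0^t\!\!\!\int_\R \kappa(s,s,z,\cdot)\mu(dsdz) + \int_0^t\!\!\!\int_\R \bigl[\kappa(t,s,z,\cdot)-\kappa(s,s,z,\cdot)\bigr]\mu(dsdz).
\end{align*}
Then use the fundamental theorem of calculus on the difference, $b(t,s,\cdot)-b(s,s,\cdot)=\int_s^t \partial_r b(r,s,\cdot)dr$ and similarly for $\kappa$, and rewrite $\int_s^t = \int_0^t \mathds{1}_{s\le r}$. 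This is exactly the rewriting already shown in the excerpt for $\kappa$.

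Next I would apply Fubini's theorem to switch the order of integration. For the drift piece this is the classical deterministic Fubini (justified by the assumed $L^2(dt\times dP)$ integrability of $\partial_t b$). For the stochastic piece I would invoke the stochastic Fubini theorem of Jacod (already cited in the paper), whose hypotheses require that the inner integrand is jointly measurable and satisfies an $L^2(\Lambda\times dP)$ bound locally in $r$; the local boundedness of $\partial_t\kappa$ uniformly in $t$ together with the Lipschitz condition \eqref{lipschitz_partial_t} ensures these integrability and regularity requirements and also gives the well-posedness of the inner stochastic integral $\int_0^r\!\int_\R \partial_r\kappa(r,s,z,\cdot)\mu(dsdz)$ for each $r$. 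This yields
\begin{equation*}
\int_0^t\!\!\!\int_\R \bigl[\kappa(t,s,z,\cdot)-\kappa(s,s,z,\cdot)\bigr]\mu(dsdz)=\int_0^t\!\!\!\left\{\int_0^r\!\!\!\int_\R \partial_r\kappa(r,s,z,\cdot)\mu(dsdz)\right\}dr,
\end{equation*}
and an analogous identity for $b$.

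Putting everything together, $X(t)$ is expressed as a sum of a $dt$-absolutely continuous process and the martingale $\int_0^t\!\int_\R \kappa(s,s,z,\cdot)\mu(dsdz)$. Differentiating in $t$ (the martingale part by construction, the $dt$-part by the Lebesgue differentiation theorem for the Lebesgue-integrable integrands) produces exactly \eqref{functionalSDE_orig}. The main obstacle is the verification of the hypotheses for the stochastic Fubini theorem; this is where \eqref{lipschitz_partial_t} and the local-boundedness condition on $\partial_t\kappa$ are essential, and these are precisely the conditions imported from \cite{Protter}, so the remaining work is bookkeeping in our time-changed setting.
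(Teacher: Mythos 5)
Your proposal is correct and follows essentially the same route as the paper: the remark is an immediate consequence of Lemma \ref{Transformation rule}, whose proof in the paper uses exactly your decomposition into the diagonal term plus a correction, the fundamental theorem of calculus on the kernel in its first argument, and the stochastic Fubini theorem of \cite{Jacod}, with \eqref{lipschitz_partial_t} and local boundedness of $\partial_t\kappa$ invoked (via \cite{Protter}) for the well-posedness of the inner stochastic integral. Your additional handling of the drift term by the same splitting is equivalent to the Leibniz differentiation used implicitly in the paper, so no gap remains.
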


    \vspace{2mm}
    Having discussed the existence of a solution for \eqref{dXt}, we are finally ready to proceed to our optimization results. We start by introducing the notion of admissible controls:
    	\begin{defin}\label{definition_admissible}
		The admissible controls for \eqref{dXt} in the optimization problems \eqref{J(u) wrt F} and \eqref{J(u) wrt G} are predictable stochastic processes  $u:[0,T] \times \Omega \longmapsto \calU$  such that $X$ in \eqref{dXt} has a unique strong solution and
		\begin{equation*}
		    \E\left[\int_0^T F(t,\lambda_t,u(t), X(t))dt+G( X(T))+|\partial_xG(X(T))|^2\right]<\infty
		\end{equation*}
		We denote $\calA^{\bbF}$ and $\calA^{\bbG}$ the sets of $\bbF$- or $\bbG$-predictable controls, respectively. We say that $(\hatu,\hatX)$ is an \emph{optimal pair} if 
		\begin{equation}\label{J(u) wrt both}
			J(\hatu)=\sup_{u\in\calA^{\cdot}}\E\left[\int_0^T F(t,\lambda_t,u(t), X(t))dt+G( X(T))\right],
		\end{equation}
		where $\hat X:=X^{\hatu}$ is as in \eqref{dXt}, and $\calA^{\cdot}$ is either the set $\calA^\bbF$ or $\calA^\bbG$.
	\end{defin}	
	Define $\mathcal{R}^{\bbG}$ to be the space of $\bbG$-predictable processes with values in $L^2(dP)$. We remark that, if $y\in\mathcal{R}^{\bbG}$, then the NA-derivative \eqref{Definition_Derivative} is also in $\mathcal R^\bbG$ i.e. for all $t,z$ $\calD_{t,z}y(\cdot)\in\mathcal R^\bbG$. In the sequel, when no confusion arises, we will denote with $\calD_{t,0}y(\cdot)$ the NA-derivative with respect to the conditional Brownian motion, and with $\calD_{t,z}y(\cdot)$, $z\in\R_0$, the NA-derivative with respect to the conditional Poisson random measure.

 \vspace{2mm}
	In view of the Volterra structure of the dynamics \eqref{dXt}, the system is not Markovian. We tackle the problem \eqref{J(u) wrt both} by the maximum principle approach, better suited in this case, see e.g. \cite{Stochastic_controls}. We introduce the Hamiltonian function:	
	$$\calH:[0,T]\times\Xi_{{\R}^2_+}\times \Xi_{\calU}\times\Xi_{\R}\times\mathcal R^{\bbG}\times \Xi_{\mathcal Z}\times\Omega\longrightarrow\R,$$ as the mapping given by the sum
	\begin{equation}\label{hamiltonG}
	\calH(t,\lambda,u,x,p,q):=H_0(t,\lambda,u,x,p,q)+H_1(t,\lambda,u,x,p,q)
	\end{equation}
	of the two components
	\begin{align*}
	H_0(t,\lambda,u,x,p,q)&:=F(t,\lambda_t,u_t,x_t)+b(t,t,\lambda_t,u_t,x_t)p(t)+\kappa(t,t,0,\lambda_t,u_t,x_t)q_t(0)\lambda_t^B\\
	&\quad +\int_{\R_0}\kappa(t,t,z,\lambda_t,u_t,x_t)q_t(z)\lambda_t^H\nu(dz)\\
	H_1(t,\lambda,u,x,p,q)
	&:=\int_0^t\partial_t b(t,s,\lambda_s,u_s,x_s)ds\ p(t) +\int_0^t\!\!\!\int_\R\partial_t \kappa(t,s,z,\lambda_s,u_s,x_s) \calD_{s,z} p(t)\Lambda(dsdz),
	\end{align*}
	where $\mathcal{Z}$ is the space of functions $q:\R\longrightarrow\R$ such that
    \begin{equation*}
	    |q(0)|^2+\int_{\R_0}q(z)^2\nu(dz)<\infty.
    \end{equation*}

    \begin{obs}
    Following up on Remark \ref{obs FSDEs}, instead of considering \eqref{dXt} as a Volterra equation, we could have taken the FSDE \eqref{functionalSDE} and, following e.g. \cite{OksendalDhal}, write the Hamiltonian functional for the functional SDE. We notice that, regardless of the chosen approach, we would still end up with the Hamiltonian functional \eqref{hamiltonG}.
    \end{obs}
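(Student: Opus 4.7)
The plan is to write down explicitly the Hamiltonian produced by the functional-SDE formulation and identify each of its pieces with the corresponding term in $\calH$ from \eqref{hamiltonG}. Starting from the FSDE representation \eqref{functionalSDE}, the drift coefficient is
$$B(t,\lambda_\cdot,u_\cdot,X_\cdot,Z_\cdot)=b(t,t,\lambda_t,u_t,X_t)+\int_0^t\partial_t b(t,s,\lambda_s,u_s,X_s)\,ds+Z(t),$$
with $Z(t)=\int_0^t\int_\R\partial_t\kappa(t,s,z,\lambda_s,u_s,X_s)\mu(dsdz)$, while the coefficient attached to $\mu(dtdz)$ is $\kappa(t,t,z,\lambda_t,u_t,X_t)$. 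Following the convention of \cite{OksendalDhal}, the FSDE Hamiltonian pairs the drift with the adjoint $p(t)$ and the diffusion with $q_t(z)$ integrated against $\lambda^B_t$ at $z=0$ and $\lambda^H_t\nu(dz)$ on $\R_0$, and then adds the running cost $F$. This reproduces $H_0$ verbatim and leaves two extra contributions: $p(t)\int_0^t\partial_t b(t,s,\cdot)\,ds$ and $p(t)Z(t)$.

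The first of these already coincides with the $\partial_t b$-piece of $H_1$, so the only remaining task is to identify $p(t)Z(t)$ with the $\partial_t\kappa$-piece of $H_1$, namely
$$\int_0^t\!\!\!\int_\R\partial_t\kappa(t,s,z,\lambda_s,u_s,X_s)\,\calD_{s,z}p(t)\,\Lambda(dsdz).$$
This is precisely the NA-derivative duality of Proposition \ref{Yab}: treating $p(t)$, for each frozen $\t$, as an element of $L^2(dP)$ and $\partial_t\kappa(t,\cdot,\cdot)$ as a $\bbG$-predictable integrand in $\mathcal{I}^\bbG$, Proposition \ref{Yab} yields
$$\E\!\left[p(t)\int_0^t\!\!\!\int_\R\partial_t\kappa(t,s,z,\cdot)\mu(dsdz)\right]=\E\!\left[\int_0^t\!\!\!\int_\R\partial_t\kappa(t,s,z,\cdot)\,\calD_{s,z}p(t)\,\Lambda(dsdz)\right].$$

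The main subtlety is that this identity holds in expectation, not pathwise, so the two raw Hamiltonians do not agree as random functions. However, in both approaches the Hamiltonian is only ever used inside expectations --- in the first-variation expansion of $J(u+\epsilon v)-J(u)$ and in the derivation of the adjoint BSDE \eqref{BSVIE_Intro} --- so both formulations produce the same variational inequality, the same adjoint equation, and therefore the same necessary and sufficient optimality conditions. In that sense the Volterra route and the FSDE route both lead to the Hamiltonian \eqref{hamiltonG}. The only technical point to verify is that Proposition \ref{Yab} may be applied parameter-wise in $t$; I expect this to be the mildly delicate step, and it follows from the joint measurability of $(t,\omega)\mapsto p(t,\omega)$ together with the approximation by simple $\bbG$-predictable random fields used in the very construction of $\calD$ in \eqref{Definition_Derivative}, together with a Fubini argument to exchange the $t$-integration with the expectation in the variational expansion.
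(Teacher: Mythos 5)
Your argument is correct and matches the paper's own (implicit) justification: the paper states this remark without proof, but the identification of the $p(t)Z(t)$ term with the $\partial_t\kappa$-part of $H_1$ via the duality formula (Proposition \ref{Yab}) combined with Fubini is exactly the step the paper itself performs in \eqref{Ok40-41} inside the proof of Theorem \ref{MaximumF}, where the Hamiltonian only ever enters under expectation. Your caveat that the two Hamiltonians agree only in expectation rather than pathwise, and that this suffices for the variational and adjoint analysis, is precisely the right reading of the remark.
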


    \vspace{2mm}
	\noindent Associated to $\calH$ \eqref{hamiltonG}, we introduce a BSDE of the type \eqref{BSVIE_Intro}, which we study under $\bbG$:
	\begin{align}
	p(t)=&\partial_x G(X(T))+\int_t^T\partial_x\calH(s,\lambda_{s},u(s),X(s\shortminus),p(s\shortminus),q(s,\cdot))ds\nonumber\\
	&-\int_t^T\!\!\!\int_{\R}q(s,z)\mu(dsdz), \quad \t,	\label{dYt}
	\end{align}
where the derivative $\partial_x\calH$ is meant in the Fréchet sense.
    
    \noindent Sufficient conditions to guarantee the existence of \eqref{dYt} on $\mathcal R^{\bbG}\times \mathcal{I}^{\bbG}$ can be found in \cite{DiNunno1}.
	\begin{obs}
	    Notice that \eqref{dYt} is actually a BSDE and not a Volterra-type backward SDE. In fact, the term $\partial_x H_1(t, \lambda, u, X, p,q)$ in the driver $\partial_x\calH(t, \lambda, u, X, p,q)$, corresponds to
	    \begin{align*}
	   &\partial_xH_1(s,\lambda_s,u,X,p,q) = \partial_x\int_0^s\partial_s b(s,r,\lambda_r,u(r),X(r))dr\ p(s) \\&+ \partial_x\int_0^s\!\!\!\int_\R\partial_s\kappa(s,r,z,\lambda_r,u(r),X(r))\calD_{r,z}p(s)\Lambda(drdz),
	    \end{align*}
   which is a function of time $s$, after integration.
	\end{obs}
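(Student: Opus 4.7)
The claim to verify is that \eqref{dYt} is a genuine BSDE of the Pardoux--Peng type rather than a Volterra-type backward SDE. Recall that a Volterra backward equation carries a driver depending non-trivially on two time variables---the evaluation time $t$ and the running time $s\geq t$---and typically an integrand $q(t,s,z)$ depending on both. The plan is to show that, once $\partial_x\calH$ is evaluated as required by \eqref{dYt}, namely at $(s,\lambda_s,u(s),X(s\shortminus),p(s\shortminus),q(s,\cdot))$, the result is a measurable function of the single time $s$. Consequently \eqref{dYt} fits the standard BSDE template, with driver $g(s,\ldots)$ and a single-time adjoint pair $(p(s),q(s,\cdot))$.

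Since $H_0(s,\ldots)$ is already pointwise in $s$ throughout, nothing needs to be checked for it and the work reduces to inspecting $H_1$. Starting from the definition
\begin{align*}
H_1(t,\lambda,u,x,p,q)&=\int_0^t \partial_t b(t,s,\lambda_s,u_s,x_s)\,ds\;p(t)\\
&\quad +\int_0^t\!\!\!\int_\R \partial_t\kappa(t,s,z,\lambda_s,u_s,x_s)\,\calD_{s,z}p(t)\,\Lambda(dsdz),
\end{align*}
I substitute the running time of \eqref{dYt} (also called $s$) for the external $t$, and simultaneously rename the internal dummy variable to $r$ in order to avoid a name clash, obtaining
\begin{align*}
H_1(s,\lambda,u,x,p,q)&=\int_0^s \partial_s b(s,r,\lambda_r,u_r,x_r)\,dr\;p(s)\\
&\quad +\int_0^s\!\!\!\int_\R \partial_s\kappa(s,r,z,\lambda_r,u_r,x_r)\,\calD_{r,z}p(s)\,\Lambda(drdz).
\end{align*}
Applying $\partial_x$ termwise and interchanging it with both integrals yields exactly the expression stated in the remark; inspection shows that the result depends only on $s$, on the trajectories of $(\lambda,u,X,p)$ on $[0,s]$ and on $(p(s),\calD_{\cdot,\cdot}p(s))$, so no auxiliary $t$-parameter survives.

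The only technical point requiring care is the exchange of $\partial_x$ with the two integrals. For the $dr$-integral this is a standard dominated-convergence argument, using that $b$ is $\mathcal C^1$ in $x$ with $L^2(dr\times dP)$-integrable $x$-derivative, as postulated for \eqref{dXt}. For the $\Lambda(drdz)$-integral it suffices to verify that $(r,z)\mapsto \partial_x\partial_s\kappa(s,r,z,\lambda_r,u_r,X(r))\,\calD_{r,z}p(s)$ lies in $L^2(d\Lambda\times dP)$, which follows from the $L^2(d\Lambda\times dP)$-integrability of the $x$-derivative of $\kappa$, from $p(s)\in L^2(dP)$ (so that $\calD p(s)\in L^2(d\Lambda\times dP)$ by the very definition of the NA-derivative), and the Itô isometry \eqref{QuadVar}. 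The main---purely conceptual---obstacle is being scrupulous about variable naming: both $H_1$ and the BSDE use $s$ for different roles, so one must relabel the inner integration variable before differentiating. Once this relabelling is performed, the putative two-time structure collapses to a diagonal one and \eqref{dYt} is seen to be a standard (single-time) BSDE rather than a backward Volterra equation.
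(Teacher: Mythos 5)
Your argument is correct and coincides with what the paper itself does: the remark is justified precisely by evaluating $H_1$ at the diagonal (external time equal to the BSDE's running time $s$), relabelling the inner dummy variable, and observing that after integration no second time parameter survives, so the driver is a function of $s$ alone. Your added care about interchanging $\partial_x$ with the $dr$- and $\Lambda(drdz)$-integrals is a harmless elaboration the paper leaves implicit; otherwise the two arguments are the same.
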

	
	The optimal control problem \eqref{J(u) wrt F}:
	\begin{equation}\label{performancefunctionalF}
	J(\hatu)=\sup_{u\in\calA^{\bbF}}J(u)=\sup_{u\in\calA^\bbF}\E\left[\int_0^T F(t,\lambda_t,u(t),X^u(t))dt+G(X^u(T))\right],
	\end{equation} 
	associated to the performance functional \eqref{performancefunctionalG} is treated in the framework of optimization  under partial information. This is inspired by \cite{DiNunno1}, where this approach is taken for standard time-changed dynamics. In the Volterra case treated in the present work, the functionals stemming out of \eqref{hamiltonG} are very different from the ones in \cite{DiNunno1}. Indeed we introduce the mapping $\calH^\bbF$ defined for $\t$, $\lambda \in \Xi_{\R^2_+}$, $u\in\Xi_{\calU}$, $x\in\Xi_\R$, $p\in \mathcal R^\bbG$ and $q\in \mathcal{I}^\bbG$ as
	\begin{align}
	\calH^\bbF(t,\lambda,u,x,p,q)&:=H_0^\bbF(t,\lambda,u,x,p,q)+H_1^\bbF(t,\lambda,u,x,p,q)\nonumber \\&:=\E\left[\calH(t,\lambda,u,x,p,q)|\F_t\right] \label{hamiltonF},
	\end{align}
	where
	\begin{align*}
	H_0^\bbF(t,\lambda,u,x,p,q)&:=F(t,\lambda_t,u_t,x_t)+b(t,t,\lambda_t,u_t,x_t)\E[p(t)|\F_t]+\kappa(t,t,0,\lambda_t,u_t,x_t)\E[q(t,0)|\F_t]\lambda^B_t\nonumber\\
	&\quad +\int_{\R_0}\kappa(t,t,z,\lambda_t,u_t,x_t)\E[q(t,z)|\F_t]\lambda^H_t\nu(dz)\nonumber\\
	H_1^\bbF(t,\lambda,u,x,p,q)&:=\int_0^t\partial_t b(t,s,\lambda_s,u_s,x_s) ds\ \E[p(t)|\F_t] \\&\quad+ \int_0^t\!\!\!\int_\R \partial_t\kappa (t,s,z,\lambda_s, u_s,x_s) \E[\calD_{s,z}p(t)|\F_t]\Lambda(dsdz)
	\end{align*}
	
	\begin{notz}\label{notz}
	    Given $u,\hat u \in \calA^{\cdot}$, $X, \hat X$ represent the associated controlled dynamics of \eqref{dXt} and $(p,q), (\hat p,\hat q)$ are the corresponding solutions of \eqref{dYt}.
		From now on, if no confusion arises, we will use the compact notation: $$b(t,s):=b(t,s,\lambda_s,u(s),X(s)), \quad \hatb(t,s):=b(t,s,\lambda_s,\hatu(s),\hatX(s)).$$ 
  Similarly, for $\kappa$, $\hatkappa$, $F$, $\hat F$, $G$, $\hat G$, we will also write:
		$$\calH^{u}(s):=\calH(s,\lambda,u, X,\hat p,\hat q), \quad \calH^{\hat u}(s):=\calH(s,\lambda,\hat u, \hatX,\hat p,\hat q)$$
		and similarly for $\calH^{\bbF,u}$, $\calH^{\bbF,\hatu}$, $H_0^{u}$, $H_0^{\hat u}$, $H_0^{\bbF, u}$, $H_0^{\bbF, \hat u}$, $H_1^{u}$, $H_1^{\hat u}$, $H_1^{\bbF, u}$, $H_1^{\bbF, \hat u}$.
	\end{notz}
	
	\begin{teo}\label{MaximumF}\emph{\textbf{(Sufficient maximum principle with respect to $\bbF$).}}
 Let $\lambda \in \calL$.
		Let $\hat{u} \in\calA^\bbF$ and assume that the corresponding solutions $\hatX$, $(\hat p,\hat q)$ of \eqref{dXt} and \eqref{dYt} exist.
		Assume that 
		\begin{itemize}
			\item $x\longmapsto G(x)$ is concave. 
			\item For any $t$, the map
			\begin{equation}\label{ArrowConditionF}
			x\longmapsto\esssup_{u\in \Xi_{\calU}}\calH^\bbF(t,\lambda,u,x,\hat p,\hat q), \quad x \in \Xi_\R,
			\end{equation}
			is concave.
			\item For all $\t$,
			\begin{equation}\label{MaximumConditionF}
			\hspace{-.4 cm} 
   \esssup_{u\in \Xi_{\calU}}\calH^\bbF(t,\lambda,u,x,\hat p,\hat q) 
=\calH^\bbF(t,\lambda,\hatu,\hat X,\hat p,\hat q). 
			\end{equation}
		\end{itemize}
		Then $\hatu$ is an optimal control for problem \eqref{performancefunctionalG} and $(\hatu,\hatX)$ is an optimal pair.
	\end{teo}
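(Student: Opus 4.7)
The plan is to adapt the classical sufficient maximum principle scheme, with the Volterra structure handled through Lemma \ref{Transformation rule} (to put \eqref{dXt} in differential form) and Proposition \ref{Yab} (to dualise the Volterra-type $\mu$-integrals against the NA-derivative of $\hat p$). Fix any admissible $u \in \calA^\bbF$ with corresponding $X,(p,q)$, write $\Delta X := \hat X - X$, and decompose
\[
J(\hat u) - J(u) = \E\int_0^T(\hat F - F)(t)\,dt + \E\bigl[G(\hat X(T)) - G(X(T))\bigr].
\]
Concavity of $G$ together with the terminal condition $\hat p(T) = \partial_x G(\hat X(T))$ of \eqref{dYt} yields $G(\hat X(T)) - G(X(T)) \geq \hat p(T)\Delta X(T)$, so it suffices to bound $\E\int_0^T(\hat F - F)\,dt + \E[\hat p(T)\Delta X(T)]$ from below by zero.

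The core step is to expand $\E[\hat p(T)\Delta X(T)]$ by the Itô product rule, having rewritten both $\hat X$ and $X$ in differential form via Lemma \ref{Transformation rule}, and using $d\hat p(t) = -\partial_x\calH^{\hat u}(t)\,dt + \int_\R \hat q(t,z)\mu(dtdz)$ from \eqref{dYt}. After taking expectations the pure $\mu$-martingale integrals vanish, the quadratic covariation $d[\hat p,\Delta X]$ reproduces the $\hat q\cdot(\hat\kappa-\kappa)(t,t,\cdot)$ terms that are exactly the $q$-part of $H_0^{\hat u}-H_0^u-(\hat F - F)$, and the delicate mixed term $\E\bigl[\hat p(t)\int_0^t\!\int_\R(\partial_t\hat\kappa - \partial_t\kappa)(t,s,z)\mu(dsdz)\bigr]$ is rewritten, via Proposition \ref{Yab} applied in $(s,z)$ for each fixed $t$, as $\E\int_0^t\!\int_\R(\partial_t\hat\kappa - \partial_t\kappa)(t,s,z)\calD_{s,z}\hat p(t)\Lambda(dsdz)$, reproducing the NA-derivative piece of $H_1^{\hat u}-H_1^u$. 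Collecting all contributions and comparing with definitions of $H_0$ and $H_1$ yields
\[
\E[\hat p(T)\Delta X(T)] = \E\int_0^T\!\bigl[\calH^{\hat u}(t) - \calH^u(t) - (\hat F - F)(t) - \partial_x\calH^{\hat u}(t)\Delta X(t)\bigr]dt,
\]
so that
\[
J(\hat u) - J(u) \geq \E\int_0^T \bigl[\calH^{\hat u}(t) - \calH^u(t) - \partial_x\calH^{\hat u}(t)\Delta X(t)\bigr]dt.
\]
Since $u,\hat u, X, \hat X$ are $\bbF$-adapted, the tower property together with the definition \eqref{hamiltonF} allows to replace each $\calH$ by $\calH^\bbF$ inside the expectation.

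It remains to invoke the Arrow condition. Setting $\hat h(t,x) := \esssup_{v\in\Xi_\calU}\calH^\bbF(t,\lambda,v,x,\hat p,\hat q)$, \eqref{ArrowConditionF} makes $\hat h(t,\cdot)$ concave and \eqref{MaximumConditionF} gives $\hat h(t,\hat X(t)) = \calH^{\bbF,\hat u}(t)$. Since at $x = \hat X(t)$ the essential supremum is attained by $\hat u$, the envelope theorem identifies $\partial_x\hat h(t,\hat X(t)) = \partial_x\calH^{\bbF,\hat u}(t)$, and concavity of $\hat h(t,\cdot)$ then yields $\calH^{\bbF,u}(t) \leq \hat h(t,X(t)) \leq \calH^{\bbF,\hat u}(t) + \partial_x\calH^{\bbF,\hat u}(t)(X(t)-\hat X(t))$. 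This makes the integrand in the bound non-negative and closes $J(\hat u)-J(u)\geq 0$. I expect the main obstacle to be the careful bookkeeping of the Itô expansion, and in particular the rigorous application of stochastic Fubini and of Proposition \ref{Yab} to convert the Volterra $\mu$-integrals into the $\calD$-terms of $H_1$ under the integrability inherited from Theorem \ref{Esistenza Forward} and the existence theory for \eqref{dYt}; the envelope identification at the optimum is more routine but also requires the essential supremum to be smooth enough at $\hat X(t)$, which should follow from the Arrow concavity and the interior-optimality built into \eqref{MaximumConditionF}.
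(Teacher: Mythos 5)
Your proposal is correct and follows essentially the same route as the paper's proof: concavity of $G$ plus the terminal condition of \eqref{dYt}, the transformation rule (Lemma \ref{Transformation rule}) and the It\^o product rule to expand $\E[\hat p(T)(X(T)-\hat X(T))]$, the duality of Proposition \ref{Yab} with stochastic Fubini to turn the Volterra $\mu$-term into the $\calD_{s,z}\hat p(t)$ piece of $H_1$, conditioning on $\F_t$ to pass to $\calH^\bbF$, and the Arrow concavity/maximum conditions to conclude. The only cosmetic differences are that you condition at the end rather than along the way and phrase the final step via an envelope-theorem identification where the paper invokes a separating hyperplane (supergradient) argument for the concave map \eqref{ArrowConditionF}.
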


\begin{proof} 
	This proof is inspired by both the proof of \cite{Oksendal1} Theorem 4.1 and \cite{DiNunno1} Theorem 6.2. The main difference with \cite{Oksendal1} is the use of the random measure $\mu$ instead of a Brownian motion and a compensated random Poisson measure, which requires to abandon the framework of Malliavin calculus. The main difference with \cite{DiNunno1} is the Volterra structure of the dynamics for the forward equation \eqref{dXt}, which lead to more involved stochastic calculus. Recall that $\hat u \in \calA^{\bbF}$ is a candidate to be optimal and $X^{\hat u}$ is the corresponding solution of \eqref{dXt}.
	Choose an arbitrary other $u\in\calA^\bbF$ with corresponding controlled dynamics $X$ and consider $J(u)-J(\hatu)=I_1+I_2$, where 
	\begin{align}
	I_1&:=\E\left[\int_0^T F(t,\lambda_t,u(t),X(t))-F(t,\lambda_t,\hatu(t),\hatX(t))dt\right], \label{I_1}\\
	I_2&:=\E\left[G(X(T))-G(\hatX(T))\right].
	\end{align}
	Considering now $I_1$, from the definition of $H_0^\bbF$ we get that,
	\begin{align*}
	I_1&=\E\left[\int_0^T\!\!\!\left\{H_0^{\bbF,u}(t)-H_0^{\bbF,\hatu}(t)-[b(t,t)-\hatb(t,t)]\E\left[\hat p(t)|\F_t\right]\right\}dt\right.\\
	&\left.\quad -\int_0^T\!\!\!\int_{\R}[\kappa(t,t,z)-\hatk(t,t,z)]\E\left[\hat  q (t,z)|\F_t\right]\Lambda(dtdz)\right].
	\end{align*}
	By the concavity of $G$, we have
	\begin{equation*}
	I_2\leq\E\left[\partial_xG(\hatX(T))\left(X(T)-\hatX(T)\right)\right]=\E\left[\hat p(T)\left(X(T)-\hatX(T)\right)\right].
	\end{equation*}
	We apply the transformation rule (Lemma \ref{Transformation rule}) to rewrite the Volterra forward dynamics of $X$ as 
	\begin{equation*}
	     dX(t)= \Big(b(t,t)  + \int_0^t\partial_t b(t,s) ds +\int_0^t\!\!\!\int_\R \partial_t \kappa (t,s,z)\mu(dsdz)\Big)dt+ \int_\R \kappa(t,t,z)\mu(dtdz).
	\end{equation*}
	Also the BSDE $\hat p$ \eqref{dYt} associated to the optimal pair $(\hatu, \hatX)$ in differential notation is
	\begin{align*}
	   d\hat p(t)&=-\partial_x\calH^{\hatu }(t)dt+\int_\R \hat q(t,z)\mu(dtdz).
	\end{align*}
	Using the Itô formula for the product we obtain
	\begin{align}
	I_2	&\leq\E\left[\int_0^T\!\!\left\{\hat p(t)\left(\left(b(t,t)-\hatb(t,t)\right)+\int_0^t\left(\partial_t b(t,s)-\partial_t \hatb(t,s)\right)ds\right.\right.\right.\nonumber\\
	&\left.\left.\quad +\int_0^t\!\!\!\int_{\R}\left(\partial_t \kappa(t,s,z)-\partial_t \hatk(t,s,z)\right)\mu(dsdz)\right)\right\}dt\nonumber\\
	&\quad -\int_0^T\partial_x \calH^{\hatu}(t)\left(X(t)-\hatX(t)\right) dt+\int_0^T\left\{[\kappa(t,t,0)-\hatk(t,t,0)]\hat q(t,0)\lambda^B_t\right.\nonumber\\
	&\left.\left.\quad+ \int_{\R_0}[\kappa(t,t,z)-\hatk(t,t,z)]\hat q(t,z)\nu(dz)\lambda^H_t\right\}dt\right]\label{I2_minoration}.
	\end{align}
	Now notice that, 
    \begin{align}
	\E\left[\int_0^T\left(\int_0^t\!\!\!\int_{\R}\partial_t \kappa(t,s,z)\mu(dsdz)\right)\hat p(t) dt\right]
	&=\int_0^T\E\left[\left(\int_0^t\!\!\!\int_{\R}\partial_t \kappa(t,s,z)\mu(dsdz)\right)\hat p(t)\right]dt\nonumber\\
	&=\int_0^T\E\left[\int_0^t\!\!\!\int_{\R}\partial_t \kappa(t,s,z)\calD_{s,z}\hat p(t)\Lambda(dsdz)\right]dt\nonumber\\
	&=\E\left[\int_0^T\!\!\!\int_0^t\!\!\!\int_\R\partial_t\kappa(t,s,z)\calD_{s,z}\hat p(t)\Lambda(dsdz)dt \right]\label{Ok40-41}
	\end{align}
	where we have used Fubini's theorem and the duality formula (Proposition \ref{Yab}). By substituting \eqref{Ok40-41} into \eqref{I2_minoration}, and taking the conditional expectation given $\F_t$ we get that
	\begin{align*}
	I_2&\leq \E\left[\int_0^T\left\{\left(b(t,t)-\hatb(t,t)\right)\E\left[\hat p(t)|\F_t\right]+\int_0^t\left(\partial_t b(t,s)-\partial_t \hatb(t,s)\right)ds\ \E\left[\hat p(t)|\F_t\right]\right.\right.\\
	&\left.\quad+\int_0^t\!\!\!\int_{\R_0}\left(\partial_t \kappa(t,s,z)-\partial_t \hatk(t,s,z)\right)\E\left[\calD_{s,z}\hat p(t)|\F_t\right]\Lambda(dsdz)\right\} dt\\
	&\quad\left.-\int_0^T\partial_x \calH^{\bbF, \hat u}(t)\left(X(t)-\hatX(t)\right)dt+\int_0^T\!\!\!\int_{\R}
	\E\left[\hat q(t,z)|\F_t\right][\kappa(t,t,z)-\hatk(t,t,z)]\Lambda(dtdz)\right].
	\end{align*}
	Hence
	\begin{align}
	I_1+I_2&\leq\E\left[\int_0^T\left(H_0^{\bbF,u}(t)-H_0^{\bbF,\hatu}(t)+H_1^{\bbF,u}(t)-H_1^{\bbF,\hatu}(t)-\partial_x \calH^{\bbF,\hatu}(t)\left(X(t)-\hatX(t)\right)\right)dt\right]\nonumber \\
	&=\E\left[\int_0^T\!\!\!\left(\calH^{\bbF,u}(t)-\calH^{\bbF,\hatu}(t)-\partial_x{\calH^{\bbF,\hatu}}(t)\left(X(t)-\hatX(t)\right)\right)dt\right]\leq 0,\label{IntegrandaMaximumPrinciple}
	\end{align}
	 $dt\times dP$ a.e. by the maximality of $\hatu$ in \eqref{MaximumConditionF} and the concavity condition \eqref{ArrowConditionF}. Hence $ J(u)\leq J(\hatu)$ and $\hatu$ is an optimal control for \eqref{performancefunctionalG}. This conclusion is reached applying a separating hyperplane argument to the concave map \eqref{ArrowConditionF}.	
\end{proof}

	Notice that a result analogous to Theorem \ref{MaximumF} can also be obtained when working under the initially enlarged filtration $\bbG$. Though the next result might not be of direct applicability in view of the anticipated information included in $\bbG$, the study has mathematical validity. 

 \begin{obs}
     The transformation rule under \eqref{lipschitz_partial_t} allows for the use of an Itô-type formula in the context of Volterra dynamics. If the equation would not present Volterra structure in the stochastic integral part (i.e. in the coefficient $\kappa$), then the requirement \eqref{lipschitz_partial_t} is clearly lifted.
 \end{obs}

	\begin{prop}\label{MaximumG}\emph{\textbf{(Sufficient maximum principle with respect to $\bbG$).}}
 Let $\lambda \in \calL$.
		Let $\hatu\in\calA^\bbG$ and assume that the corresponding solutions $\hatX(t), (\hat p,\hat q)$ of \eqref{dXt} and \eqref{dYt} exist.
		Assume that:
		\begin{itemize}
			\item $x\longmapsto G(x)$ is concave. 
			\item For any $t$, $\hat p$, $\hat q$, the function
			\begin{equation}\label{ArrowConditionG}
			x\longmapsto\esssup_{u\in\Xi_{\calU}}\calH(t,\lambda,u,x,\hat p,\hat q), \qquad x \in \xi_{\R}
			\end{equation}
			is concave in $x$.
			\item For all $\t$,
			\begin{equation}\label{MaximumConditionG}
			\esssup_{v\in\Xi_\calU}\calH(t,\lambda,v,\hatX,\hat p,\hat q)=\calH(t,\lambda,\hatu,\hatX,\hat p,\hat q). 
			\end{equation}
		\end{itemize}
		Then $\hatu$ is an optimal control for the problem \eqref{J(u) wrt G}.
	\end{prop}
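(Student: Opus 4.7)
The plan is to mirror the proof of Theorem \ref{MaximumF} but work directly under $\bbG$, which actually simplifies matters: since $\hat p$, $\hat q$ are $\bbG$-adapted and $u,\hat u\in\calA^\bbG$, all conditional expectations $\E[\cdot\mid\F_t]$ that appear in the $\bbF$-proof can be dropped, and the Hamiltonian $\calH$ itself (rather than $\calH^\bbF$) is the object that directly appears after the Itô computations.

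Concretely, I would fix an arbitrary $u\in\calA^\bbG$ with associated state $X$, split $J(u)-J(\hat u)=I_1+I_2$ as in \eqref{I_1}, and insert the $b\hat p$ and $\kappa \hat q$ terms to write
$$I_1=\E\!\left[\int_0^T\!\!\Big(H_0^{u}(t)-H_0^{\hat u}(t)-[b(t,t)-\hat b(t,t)]\hat p(t)\Big)dt-\int_0^T\!\!\int_\R[\kappa(t,t,z)-\hat\kappa(t,t,z)]\hat q(t,z)\Lambda(dtdz)\right].$$
For $I_2$, concavity of $G$ and the terminal condition $\hat p(T)=\partial_x G(\hat X(T))$ give $I_2\le\E[\hat p(T)(X(T)-\hat X(T))]$. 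Next, using Lemma \ref{Transformation rule}, I rewrite both $X$ and $\hat X$ in differential form and then apply the Itô product rule to $\hat p(t)(X(t)-\hat X(t))$, using the BSDE \eqref{dYt} for $d\hat p$. The martingale parts vanish in expectation by \eqref{Martingale property}/\eqref{QuadVar}, leaving a drift expression containing $(b-\hat b)\hat p$, the $(\kappa-\hat\kappa)\hat q$ covariation term via \eqref{QuadVar}, the Volterra contributions involving $\partial_t b$ and $\partial_t\kappa$, and the adjoint drift $-\partial_x\calH^{\hat u}(t)(X(t)-\hat X(t))$.

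The one term that does not fit directly is the mixed product
$$\E\!\left[\int_0^T\hat p(t)\!\int_0^t\!\!\!\int_\R\bigl(\partial_t\kappa(t,s,z)-\partial_t\hat\kappa(t,s,z)\bigr)\mu(dsdz)\,dt\right],$$
which I rewrite by Fubini (applicable under the predictability/integrability standing assumptions) and then by the duality formula in Proposition \ref{Yab} applied for fixed $t$ to the $\bbG$-predictable integrand $\partial_t\kappa-\partial_t\hat\kappa$ and the $L^2(dP)$ variable $\hat p(t)$, producing $\iint\bigl(\partial_t\kappa-\partial_t\hat\kappa\bigr)\calD_{s,z}\hat p(t)\,\Lambda(dsdz)$. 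Collecting all contributions, the $\partial_t b\cdot\hat p$ and $\partial_t\kappa\cdot\calD\hat p$ terms assemble exactly into $H_1^{u}-H_1^{\hat u}$, so that
$$I_1+I_2\le\E\!\left[\int_0^T\!\!\Big(\calH^{u}(t)-\calH^{\hat u}(t)-\partial_x\calH^{\hat u}(t)\,(X(t)-\hat X(t))\Big)dt\right].$$

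To close, I would invoke the maximality \eqref{MaximumConditionG} and the concavity of $x\mapsto\esssup_u\calH(t,\lambda,u,x,\hat p,\hat q)$ from \eqref{ArrowConditionG}: by a standard separating-hyperplane/supergradient argument at $x=\hat X(t)$, the integrand is nonpositive $dt\times dP$-a.e., yielding $J(u)\le J(\hat u)$. The main obstacle I anticipate is the careful justification of the Fubini interchange and of the duality identity when the outer integrator is $dt$ and the inner is $\mu(dsdz)$ with $\bbG$-predictable integrand depending on both $t$ and the controls — this is where $u\in\calA^\bbG$ (so that all predictability hypotheses needed by Proposition \ref{Yab} hold) and the $L^2$-integrability conditions on $\partial_t b$, $\partial_t\kappa$ from Section \ref{SecSuff} are used decisively; once these are in place the rest is bookkeeping parallel to the $\bbF$-case.
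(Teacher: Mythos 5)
Your proposal is correct and is essentially the paper's own argument: the paper proves Proposition \ref{MaximumG} by observing that the computations in the proof of Theorem \ref{MaximumF} (splitting $J(u)-J(\hat u)=I_1+I_2$, concavity of $G$, transformation rule, It\^o product rule, Fubini and the duality formula of Proposition \ref{Yab}) carry over verbatim under $\bbG$ with all conditional expectations $\E[\cdot\mid\F_t]$ dropped, yielding the same final inequality closed by \eqref{MaximumConditionG} and \eqref{ArrowConditionG}. You have simply written out explicitly the steps the paper invokes by reference, so there is nothing to correct.
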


	\begin{proof}
	Once considering the filtration $\bbG$, the arguments in the proof of Theorem \ref{MaximumF} leading to
	\begin{align*}
	J(u)&-J(\hatu)\leq \E\left[\int_0^T\left(\calH^u(t)-\calH^{\hatu}(t)-\partial_x \calH^{\hatu}(t)\left(X(t)-\hatX(t)\right)\right)dt\right]\leq 0
	\end{align*}
	apply directly without conditioning.
\end{proof}

	\section{Necessary maximum principles with time-change}
	Hereafter we study necessary conditions to identify the possible candidates for optimal controls. This can be a useful starting point before applying a verification theorem to ensure optimality. We remark that our results relax the condition of concavity present in Theorem \ref{MaximumF} and \ref{MaximumG}. However, we introduce some other assumptions on the set of admissible controls and the first variation process of the forward dynamics \eqref{dXt}. 
	
	In the literature we find a first version of necessary maximum principle for Volterra dynamics in \cite{Oksendal2}. There the driving noises were the Gaussian and the centered Poisson random measure. Our work goes beyond these noises.\\
	\noindent For any $\t$, we consider a random perturbation of the type
	\begin{equation}\label{beta}
	\beta(s):=\alpha_t\mathds{1}_{[t,t+h]}(s), \quad s\in[0,T],
	\end{equation}
	where $\alpha_t$ is a bounded $\F_t$ measurable random variable and $h\in[0, T-t]$. We make the following assumptions:
	\begin{enumerate}
		\item The set of admissible controls $\calA^{\bbF}$ is such that, for all $u\in\calA^\bbF$,
		\begin{equation*}
		u+\varepsilon\beta\in\calA^\bbF,
		\end{equation*}
		for all perturbations $\beta$ as in \eqref{beta} and all $\varepsilon>0$ sufficiently small.
		\item	The first variation process $\rchi(t)$, $\t,$ given by the derivative
		\begin{equation}\label{First Variation}
		\rchi(t):=\partial_\varepsilon X^{(u+\varepsilon\beta)}|_{\varepsilon=0}
		\end{equation}
		(see \eqref{dXt}) exists and is well defined.
		\item $\partial_x b(t,s)$ and $\partial_u b(t,s)$ are well defined and $C^1$ with respect to $t$ with partial derivatives $L^2$-integrable with respect to $dt\times dP$. $\partial_x \kappa(t,s,\cdot)$ and $\partial_u\kappa(t,s,\cdot)$ are well defined and $C^1$ with respect to $t$ with partial derivatives $L^2$-integrable with respect to $d\Lambda\times dP$.
		\item $\partial_x\kappa(t,s,z)$ and $\partial_u\kappa(t,s,z)$ are such that, for all $z\in\R$ $\lambda \in [0,\infty)^2$, $u\in\calU$, $x\in\R$, the partial derivative of $\partial_x\kappa+\partial_u\kappa$ with respect to $t$ is locally bounded (uniformly in $t$) and satisfies
	\begin{equation*}
	    |\partial_t(\partial_x\kappa(t_1,s,z,)+\partial_u\kappa(t_1,s,z))- \partial_t(\partial_x\kappa(t_2,s,z,)+\partial_u\kappa(t_2,s,z))|\leq K |t_1-t_2|,
	\end{equation*}
	for some $K>0$ and for each fixed $s\leq t$, $\lambda \in [0,\infty)^2$, $u\in\calU$, $x\in\R$.
	\end{enumerate}
	
\noindent Assumption 2. above implies that
	\begin{align*}
	\rchi(t)&=\int_0^t\Big(\partial_x b(t,s)\rchi(s)+\partial_u b(t,s)\beta(s) \Big)ds\\
	&+\int_0^t\!\!\!\int_{\R}\Big(\partial_x \kappa(t,s,z)\rchi(s)+\partial_u \kappa(t,s,z)\beta(s)\Big)\mu(dsdz),
	\end{align*}
	exists and is well defined, whereas assumption 4. ensure us to be able to apply the transformation rule for $\rchi$.

	Remark that sufficient conditions that ensure the existence of the first variation process are that  $b$ and $\kappa$ are in $C^1(\calU)$ uniformly for all $s,\t $ $\lambda \in [0,\infty)^2$, $x\in\R$ and that $(\partial_x b(t,s)\rchi(s)+\partial_u b(t,s)\beta(s))$ and $(\partial_x \kappa(t,s,z)\rchi(s)+\partial_u \kappa(t,s,z)\beta(s))$ satisfy the linear growth and lipschitzianity conditions of Theorem \ref{Esistenza Forward}. 
	
	As above we consider the performance functional \eqref{performancefunctionalG} with the related conditions on $F$ and $G$ as in Section \ref{SecSuff}. We also continue using the compact notation there introduced, see Notation \ref{notz}.	
 
	\begin{teo}\label{NecessaryMaximumTheorem_F}\emph{\textbf{(Necessary maximum principle with respect to $\bbF$).}}
 Let $\lambda \in \calL$.
		Suppose that $\hatu\in\calA^\bbF$ and the corresponding solutions $\hatX,(\hat p,\hat q)$ of \eqref{dXt} and \eqref{dYt} exist. Assume also that $P$-a.s. $F\in C^1(\calU)$ for all $\t $ $\lambda \in [0,\infty)^2$, $x\in\R$. If, for all perturbations $\beta$ as in \eqref{beta}, we have that
		\begin{equation}\label{condizionederivataF}
		\partial_\varepsilon J(\hatu+\varepsilon\beta)|_{\varepsilon=0}=0,
		\end{equation}
		then 
		\begin{equation}\label{NecessaryMaximumPrincipleF}
		\partial_u\calH^{\bbF,\hat u}(t)=0.
		\end{equation}
		The converse also holds true.
	\end{teo}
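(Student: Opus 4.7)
The plan is to mimic the differentiation-of-the-cost-functional argument that is standard for necessary maximum principles, but to carry it through carefully using the transformation rule for the Volterra first variation process and the duality for the NA-derivative (Proposition \ref{Yab}) in place of Malliavin calculus. First I would differentiate $J(\hat u+\varepsilon\beta)$ in $\varepsilon$ at $0$ term by term:
\begin{align*}
\partial_\varepsilon J(\hatu+\varepsilon\beta)|_{\varepsilon=0}
=\E\Bigl[&\int_0^T\!\!\bigl(\partial_x F(t,\lambda_t,\hatu(t),\hat X(t))\rchi(t)+\partial_u F(t,\lambda_t,\hatu(t),\hat X(t))\beta(t)\bigr)dt\\
&\ +\partial_x G(\hat X(T))\rchi(T)\Bigr],
\end{align*}
using assumption 2 to justify the interchange of $\partial_\varepsilon$ and $\E\int$. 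Since $\hat p(T)=\partial_xG(\hat X(T))$, the last term equals $\E[\hat p(T)\rchi(T)]$.

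Next I would write $\hat p(T)\rchi(T)$ via the product rule. To do so, I apply Lemma \ref{Transformation rule} (using assumption 4 to control the $t$-derivative of $\partial_x\kappa+\partial_u\kappa$) to recast the Volterra first variation process in differential form
\begin{align*}
d\rchi(t)&=\Bigl(\partial_x b(t,t)\rchi(t)+\partial_u b(t,t)\beta(t)+\int_0^t\!\bigl[\partial_t\partial_x b(t,s)\rchi(s)+\partial_t\partial_u b(t,s)\beta(s)\bigr]ds\\
&\quad+\int_0^t\!\!\!\int_\R\!\bigl[\partial_t\partial_x\kappa(t,s,z)\rchi(s)+\partial_t\partial_u\kappa(t,s,z)\beta(s)\bigr]\mu(dsdz)\Bigr)dt\\
&\quad+\int_\R\!\bigl[\partial_x\kappa(t,t,z)\rchi(t)+\partial_u\kappa(t,t,z)\beta(t)\bigr]\mu(dtdz),
\end{align*}
and I pair this with $d\hat p(t)=-\partial_x\calH^{\hat u}(t)dt+\int_\R \hat q(t,z)\mu(dtdz)$. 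The Itô product rule then produces a drift contribution, a contribution from the integrands against $\mu$, and a quadratic-variation term that contracts $\partial_x\kappa,\partial_u\kappa$ with $\hat q$ weighted by $\Lambda(dtdz)$. The Volterra (convolution in $t$) pieces are then handled by a Fubini exchange together with Proposition \ref{Yab}, which converts $\E\bigl[\hat p(t)\int_0^t\!\int_\R \partial_t\partial_x\kappa(t,s,z)\rchi(s)\mu(dsdz)\bigr]$ into an integral against $\calD_{s,z}\hat p(t)\,\Lambda(dsdz)$, exactly as was done in \eqref{Ok40-41}; analogously for the $\beta$-terms.

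Collecting everything, conditioning on $\F_t$ (legitimate because $\rchi$, $\beta$ and the coefficients are $\bbF$-adapted while only the $\hat p$, $\hat q$ and $\calD_{s,z}\hat p$ factors need projection), and comparing with the definitions of $H_0^\bbF$ and $H_1^\bbF$, the $\rchi$-dependent pieces regroup as $\partial_x\calH^{\bbF,\hatu}(t)\rchi(t)$ and cancel against the corresponding term coming from $d\hat p$; what survives is the $\beta$-linear expression
\[
\partial_\varepsilon J(\hatu+\varepsilon\beta)|_{\varepsilon=0}=\E\!\left[\int_0^T\partial_u\calH^{\bbF,\hatu}(t)\,\beta(t)\,dt\right].
\]
Finally I specialize $\beta(s)=\alpha_t\mathds{1}_{[t,t+h]}(s)$ with $\alpha_t$ bounded and $\F_t$-measurable: the criticality assumption \eqref{condizionederivataF} forces $\E[\alpha_t\int_t^{t+h}\partial_u\calH^{\bbF,\hatu}(s)ds]=0$; dividing by $h$ and sending $h\downarrow 0$ (using the $L^2$-integrability of the relevant partial derivatives in assumption 3) yields $\E[\alpha_t\partial_u\calH^{\bbF,\hatu}(t)]=0$ for every bounded $\F_t$-measurable $\alpha_t$, whence \eqref{NecessaryMaximumPrincipleF}. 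The converse is immediate: reading the chain backwards, $\partial_u\calH^{\bbF,\hatu}\equiv 0$ implies $\partial_\varepsilon J(\hatu+\varepsilon\beta)|_{\varepsilon=0}=0$ for all admissible $\beta$.

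I expect the main obstacle to be the bookkeeping in the product-rule step: the Volterra structure forces a double application of Fubini and the duality formula to transfer $\mu$-integrals onto $\calD\hat p$, and one has to show that the terms reorganize into exactly $\partial_x\calH^{\bbF}\rchi+\partial_u\calH^{\bbF}\beta$. The finite $h\downarrow 0$ limit at the end is clean once $\partial_u\calH^{\bbF,\hatu}$ is shown to be locally integrable, which follows from assumption 3 together with $F\in C^1(\calU)$.
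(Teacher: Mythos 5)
Your proposal is correct and follows essentially the same route as the paper's own proof: differentiate $J$ at $\varepsilon=0$, identify $\partial_xG(\hat X(T))\rchi(T)$ with $\hat p(T)\rchi(T)$, apply the transformation rule and the It\^o product formula, transfer the Volterra $\mu$-integrals onto $\calD_{s,z}\hat p$ via Fubini and Proposition \ref{Yab} as in \eqref{Ok40-41}, condition on $\F_t$ to regroup into $\partial_x\calH^{\bbF}\rchi+\partial_u\calH^{\bbF}\beta$, and conclude with the perturbation $\beta=\alpha_t\mathds{1}_{[t,t+h]}$ and the $h\downarrow0$ limit, reversing the chain for the converse. The only point the paper makes explicit that you leave implicit is the localization by an increasing family of stopping times to ensure the local martingales arising in the product rule are true martingales, a minor technical addition.
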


	\begin{proof}		
		With \eqref{First Variation}, we consider for $u\in \calA^\bbF$ and the perturbation \eqref{beta},
		\begin{align}\label{Variation J(u)}
	&	\partial_\epsilon J(u+\varepsilon\beta)|_{\varepsilon=0}  \\
  &=\E\left[\int_0^T\left(\partial_x F(t, \lambda_t, u(t), X(T))\rchi(t)+\partial_u F(t, \lambda_t, u(t), X(t))\beta(t)\right)dt+\partial_xG(X(T))\rchi(T)\right]. \notag
		\end{align}
		By considering a suitable increasing family of stopping times converging to $T$ as in  \cite{Stopping_times} Theorem 2.2, we may assume that all the local martingales appearing here are true martingales. 
		From \eqref{hamiltonF}, the transformation rule (Lemma \ref{Transformation rule}) and the Itô formula for the product, we find that 
		\begin{align*}
		\E&\left[\partial_x G(X(T))\rchi(T)\right]=\E\left[p(T)\rchi(T)\right]\\
		&=\E\left[\int_0^Tp(t)\left(\partial_x b(t,t)\rchi(t)+\partial_u b(t,t)\beta(t)\right)dt -\int_0^T\rchi(t)\partial_x{\calH}(t)dt \right.\\
		&\quad+ \int_0^Tp(t)\left(\int_0^t\left(\partial_t \partial_xb(t,s)\rchi(s)+\partial_t \partial_ub(t,s)\beta(s)\right)ds\right)dt\\
		&\quad+ \int_0^Tp(t)\left(\int_0^t\!\!\!\int_{\R}\left(\partial_t \partial_x\kappa(t,s,z)\rchi(s)+\partial_t \partial_u\kappa(t,s,z)\beta(s)\right)\mu(dsdz)\right)dt \\
		&\left.\quad+ \int_0^T\!\!\!\int_{\R}q(s,z)\Big(\partial_x \kappa(t,t,z)\rchi(t)+\partial_u \kappa(t,t,z)\beta(t)\Big)\Lambda(dtdz) \right].
		\end{align*}
		Now, recalling equality \eqref{Ok40-41}, and taking the conditional expectation under $\F_t$  we get that
		\begin{align*}
		\E&\left[p(T)\rchi(T)\right]\\
		&=\E\left[\int_0^T\left\{\partial_x b(t,t)\E\left[p(t)|\F_t\right]+\int_0^t\partial_x\partial_tb(t,s)ds\ \E\left[p(t)|\F_t\right] \right.\right.\\
		&\quad \left.+\int_0^t\!\!\!\int_\R \partial_x\partial_t\kappa(t,s,z)\E\left[\calD_{s,z}p(t)|\F_t\right]\Lambda(dsdz)\right\}\rchi(t)dt\\
		&\quad +\int_0^T\left\{\partial_u b(t,t)\E\left[p(t)|\F_t\right]+\int_0^t \partial_u\partial_tb(t,s)ds\ \E\left[p(t)|\F_t\right] \right.\\
		&\quad \left.+\int_0^t\!\!\!\int_{\R}\partial_u\partial_t \kappa(t,s,z)\E\left[\calD_{s,z}p(t)|\F_t\right]\Lambda(dsdz) \right\}\beta(t)dt-\int_0^T\partial_x{\calH}^{\bbF,u}(t)\rchi(t)dt\\
		&\quad \left.+\int_0^T\!\!\!\int_{\R}\left(\partial_x\kappa(t,t,z)\rchi(t)+\partial_u\kappa(t,t,z)\beta(t)\right)\E\left[q(t,z)|\F_t\right]\Lambda(dtdz)\right].
		\end{align*}
		So that, from \eqref{hamiltonF}, we can write
		\begin{align}
		\E&\left[\int_0^T\left(\partial_x F(t, \lambda_t, u(t), X(t))\rchi(t)+\partial_u F(t, \lambda_t, u(t), X(t) )\beta (t)\right)dt+\partial_x G(X(T))\rchi(T)\right]\nonumber \\
		&=\E\left[\int_0^T\partial_x \calH^{\bbF, u }(t)\rchi(t)dt-\int_0^T\partial_x \calH^{\bbF, u }(t)\rchi(t)dt+\int_0^T\partial_u\calH^{\bbF,u }(t)\beta(t)dt\right]\label{NEC rewrite}.
		\end{align}
		Summarizing, equation \eqref{Variation J(u)} together with \eqref{NEC rewrite} and the perturbations in \eqref{beta} give
		\begin{equation}\label{Ok53}
	\partial_\varepsilon J(u+\varepsilon\beta)|_{\varepsilon=0}=\E\left[\int_0^T\partial_u\calH^{\bbF, u }(t)\beta(t)dt\right]=\E\left[\int_t^{t+h}\partial_u\calH^{\bbF, u }
		(s)ds\ \alpha_t\right],
		\end{equation}
		and, for $\hatu$, \eqref{condizionederivataF} gives
		\begin{equation*}
		\partial_\varepsilon J(\hatu+\varepsilon\beta)|_{\varepsilon=0}=0.
		\end{equation*}
		Applying the Fubini theorem to the right-hand side of \eqref{Ok53} and differentiating at $h=0$ we obtain
		\begin{equation*}\label{Ok54}
		\E\left[\partial_u\calH^{\bbF, \hat u }(t) \ \alpha_t\right]=0,
		\end{equation*}
		for all $\alpha_t$ bounded and $\F_t$ measurable. Hence
		\begin{equation}\label{Ok55}
		\E\left[\partial_u\calH^{\bbF, \hat u }(t)\middle|\F_t\right] = \partial_u\calH^{\bbF, \hatu}(t)=0.
		\end{equation}
		Vice versa, if \eqref{Ok55} holds, we can reverse the argument to obtain \eqref{condizionederivataF}.
	\end{proof}
	
	As in Section 3, for the sake of completeness, we propose a necessary maximum principle under the information flow $\bbG$. This refers to the optimization problem \eqref{J(u) wrt G}. In this case we assume that, for all $u\in\calA^{\bbG}$, 
	$u+\varepsilon \beta \in\calA^{\bbG}$
	for all perturbations $\beta$ as in \eqref{beta} and $\varepsilon>0$ sufficiently small.
	
	\begin{prop}\label{NecessaryMaximumTheorem_G} \emph{\textbf{(Necessary maximum principle with respect to $\bbG$).}}	Let $\lambda \in \calL$.
		Suppose that $\hatu\in\calA^\bbG$ and the corresponding solutions $\hatX,(\hat p,\hat q)$ of \eqref{dXt} and \eqref{dYt} exist. Also assume that $F\in C^1(\calU)$ for all $\t $ $\lambda \in [0,\infty)^2$, $x\in\R$. If, for all perturbations $\beta$,
		\begin{equation}\label{condizionederivata_G}
		\partial_\varepsilon J(\hatu+\varepsilon\beta)|_{\varepsilon=0}=0,
		\end{equation}
		then 
		\begin{equation}\label{NecessaryMaximumPrinciple_G}
		\partial_u\calH^{\hat u}(t)=0.
		\end{equation}
		Conversely, if \eqref{NecessaryMaximumPrinciple_G} holds, then \eqref{condizionederivata_G} is true.		
	\end{prop}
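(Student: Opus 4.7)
The plan is to follow the proof of Theorem \ref{NecessaryMaximumTheorem_F} step by step, with the major simplification that all relevant processes ($\hatu$, $\hatX$, $\hat p$, $\hat q$, $\rchi$) are already $\bbG$-adapted, so the conditional projection onto $\F_t$ is not required. Consequently, the Hamiltonian appearing throughout is $\calH$ itself (cf.\ \eqref{hamiltonG}) rather than $\calH^\bbF$. The perturbations $\beta$ from \eqref{beta} are read here with $\alpha_t$ bounded and $\calG_t$-measurable, consistent with the standing hypothesis $\hatu+\varepsilon\beta\in\calA^\bbG$.

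First I will differentiate $J$ at $\hatu$ in the direction $\beta$, yielding an expression in $\rchi$, $\beta$, $\partial_xF$, $\partial_uF$ and the boundary term $\partial_xG(\hatX(T))\rchi(T)=\hat p(T)\rchi(T)$. The key step is then to compute $\E[\hat p(T)\rchi(T)]$ via the It\^o product formula, using: the transformation rule of Lemma \ref{Transformation rule} to put $\rchi$ in differential form (legitimate under assumption $4$); the differential form of the BSDE, $d\hat p(t)=-\partial_x\calH^{\hatu}(t)dt+\int_\R\hat q(t,z)\mu(dtdz)$; and the duality identity of Proposition \ref{Yab} to convert the $\mu$-integrals appearing inside the Lebesgue $dt$-integral into $\Lambda$-integrals of the NA-derivative $\calD\hat p$. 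This is carried out exactly as in \eqref{Ok40-41}, with a routine localization via the stopping times of \cite{Stopping_times} Theorem 2.2 ensuring that all local martingales behave as true martingales.

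After collecting terms and using \eqref{hamiltonG}, the identity reduces to $\partial_\varepsilon J(\hatu+\varepsilon\beta)|_{\varepsilon=0}=\E\bigl[\int_0^T\partial_u\calH^{\hatu}(t)\beta(t)dt\bigr]=\E\bigl[\alpha_t\int_t^{t+h}\partial_u\calH^{\hatu}(s)ds\bigr]$. Imposing \eqref{condizionederivata_G}, a Fubini swap followed by differentiation at $h=0$ gives $\E[\alpha_t\partial_u\calH^{\hatu}(t)]=0$ for every bounded $\calG_t$-measurable $\alpha_t$, which delivers \eqref{NecessaryMaximumPrinciple_G}. Reading the same chain of identities backwards supplies the converse. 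The main delicate point is bookkeeping, namely verifying that all cross terms cancel to leave only $\partial_u\calH^{\hatu}\beta$; however, this is already established in the proof of Theorem \ref{NecessaryMaximumTheorem_F}, and dropping the conditional expectation step only shortens the computation, so I expect no genuinely new analytic obstacle here.
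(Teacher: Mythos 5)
Your proposal is correct and follows essentially the same route as the paper, which likewise just reruns the argument of Theorem \ref{NecessaryMaximumTheorem_F} without the conditional expectation step, arriving at $\partial_\varepsilon J(\hatu+\varepsilon\beta)|_{\varepsilon=0}=\E\bigl[\int_0^T\partial_u\calH^{\hatu}(t)\beta(t)dt\bigr]$ and concluding as before. Your explicit remark that $\alpha_t$ should be taken bounded and $\calG_t$-measurable (so that $\E[\alpha_t\,\partial_u\calH^{\hatu}(t)]=0$ yields \eqref{NecessaryMaximumPrinciple_G} pointwise rather than only its $\F_t$-projection) is a sensible clarification of what the paper leaves implicit.
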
	
	\begin{proof}
		The argument in the proof of Theorem \ref{NecessaryMaximumTheorem_G} leading to 
		\begin{equation*}
		\partial_\epsilon J(\hatu+\epsilon \beta)|_{\varepsilon=0}=\E\left[\int_0^T\partial_u \calH^{\hat u}(t)\beta(t)dt\right]
		\end{equation*}
		still holds with no need to use conditional expectations. Since $\hatu$ and $\hatu+\epsilon \beta $ are $\bbG$-predictable, we obtain
		\begin{align*}
		\E&\left[\int_0^T\left\{\partial_u \hatb(t,t)\hat p (t)+\int_0^t\partial_u\partial_t \hatb(t,s) ds\hat p (t)+\int_0^t\!\!\!\int_{\R}\partial_u\partial_t \hatkappa(t,s,z)\calD_{s,z}\hat p (t) \Lambda(dsdz) \right\}\beta(t)dt\right.\\
		&\left. +\int_0^T\!\!\!\int_{\R}\left(\partial_u \hatkappa(t,t,z)\beta(t)\right)\hat q(t,z)\Lambda(dtdz)\right]=\E\left[\int_0^T\partial_u \calH^{\hat u }(t)\beta(t)dt\right],
		\end{align*}
		where we have used the definition of $\calH$ as in \eqref{hamiltonG}. We conclude as in Theorem \ref{NecessaryMaximumTheorem_F}.
	\end{proof}

\section{A maximum principle approach in optimal harvesting }
\label{FISH}
\label{sec: example}
We now go back to the optimal harvesting problem within fishery, where the population dynamics is given  by the dynamics \eqref{X(t) example}. 
We recall that our starting point are \cite{VolterraFish,VolterraFish2,VolterrraIntegral}, where the authors consider deterministic Volterra models to model population growth and, following e.g. \cite{StochasticFish,StochasticFish2}, we introduce some random fluctuations that will affect the population growth. Hence, the dynamics considered are of type \eqref{X(t) example}:
	\begin{equation}\label{Xu_last_example}
	    X^u(t) = X_0+\int_0^t \left(r(t,s) - K u(s)\right)X^u(s) ds +\int_0^t\sigma(s)X^u(s)dB(s), \quad \t,
	\end{equation}
	where $r(t,s):[0,T]^2\longrightarrow\R$, $\sigma(s):[0,T]\longrightarrow\R$, $K>0$, $X_0>0$. Here, $B$ is the conditional Gaussian measure.
We assume that \eqref{Xu_last_example} admits a solution, that $r(t,s)$ is $C^2$ with respect to both $t$ and $s$, and that $\sigma$ is $C^1$ with respect to $t$ and $\sigma(t) > -1 $ for all $s\in[0,T]$, $z\in\R$. Lastly we assume $r(t,s)$, $\partial_t r(t,s)$ and $\sigma(t)$ are in $L^2(dt)$. For sufficient conditions that guarantee the existence of a solution of $X$ we refer to Theorem \ref{Esistenza Forward}.
In the context of optimal harvesting of fish, $r$ represents the growth rate, $K$ the catchability coefficient, and the control $u$ is the fishing effort.
	
\vspace{2mm}
Let us define
	\begin{equation*}
	    \tau := \inf\{\t, \text{ such that } X^u(t) = 0\} \wedge T.
	\end{equation*}
	Then we can see that $X^u(t) = 0$ for all $t\geq \tau$. In fact , for $0\leq \tau \leq t \leq T$, we have that \eqref{X(t) example} can be rewritten as 
	\begin{align*}
	    X(t) &= X_0 + \int_0^\tau (r(\tau,s) - K u(s))X(s) ds + \int_0^\tau \sigma(s)X(s) dB(s) \\&\quad + \int_\tau^t (r(t,s)-Ku(s))X(s) ds + \int_\tau ^t\sigma(s)X(s) dB(s) + \int_0^\tau (r(t,s)-r(\tau,s))X(s) ds\\
	    &= X(\tau) +\int_\tau^t (r(t,s)-Ku(s))X(s) ds + \int_\tau ^t\sigma(s)X(s) dB(s) + \int_0^\tau (r(t,s)-r(\tau,s))X(s) ds,
	\end{align*}
Being $X(0)=X_0 >0$ and the process $X$ continuous,  we have that $X$ is strictly positive, up to restricting ourselves to the interval $[0,\tau]$.

	Our goal is to characterise the optimal solution to maximization of the performance functional
	\begin{equation}\label{J(u) example}
	    J(u) = \E\left[\int_0^Te^{-\delta(T-t)}X(t)u(t)dt \right],
	\end{equation}
	where $u\in\calA^{\bbF}$, $\delta  >0 $.
 In the context of oprimal harvesting this can be regarded as the aggregated net discounted revenue, see \cite{perffunc}.
Following the approach given in this work, we consider the Hamiltonian functional \eqref{hamiltonG}, which can be here rewritten as 
	\begin{align*}
	    \calH^u(t)  &= e^{-\delta(T-t)}u(t)X(t) + \big(r(t,t)-Ku(t)\big)X(t)  p (t) + \sigma(t)X(t)q(t)\lambda_t^B \\ & \quad 
	    + \int_0^t\partial_t r(t,s)X(s)-Ku(s) ds \  p (t),
	\end{align*}
	where the backward dynamics for $ p $ are given by
	\begin{align}\label{Y(t) example}
	    d p (t)&= e^{-\delta(T-t)}u(t)dt+\Bigg(r(t,t) + \int_0^t \partial_t r(t,s) ds \Bigg)  p (t) dt + \sigma(t)q(t)\lambda_t^B dt+  q(t)dB(t)\nonumber \\
	     p (T) &= 0.
	\end{align}
Also, we consider the mapping $\calH^\bbF$ in \eqref{hamiltonF}:
	\begin{align*}
	    \calH^{\bbF, u}(t)  &= e^{-\delta(T-t)}u(t)X(t) + \big(r(t,t)-K u(t)\big)X(t) \E[ p (t)|\F_t] \\ & \quad + \sigma(t)X(t)\E[q(t)|\F_t]\lambda_t^B 
	    + \int_0^t\partial_t r(t,s)X(s)- K u(s) ds \ \E[ p (t)|\F_t].
	\end{align*}
	From Theorem \ref{NecessaryMaximumTheorem_F} we see that a necessary condition for an admissible control  $\hatu$ to be optimal is that, for all $\t$, $\partial_u\calH^{\bbF,\hat u}(t)=0$.
	Furthermore, from Theorem \ref{MaximumF}, being the map \eqref{ArrowConditionF} trivially concave, the condition $\partial_u \calH^{\bbF,\hat u}(t)=0$ is also sufficient for the maximality. In particular this means that an admissible control $\hatu$ is optimal if and only if
	\begin{equation}\label{NSC}
	    e^{-\delta(T-t)}\hatX(t) = K \hatX(t)\E[\hat  p (t)|\F_t].
	\end{equation}
Namely, for all $t\in [0,\tau]$
    \begin{equation}\label{NSC_rewritten}
 \E[\hat  p (t)|\F_t] =  K^{-1 }e^{-\delta(T-t)} .
 \end{equation}
 
	To find a solution to \eqref{Y(t) example} with respect to the information flow $\bbG$, we use a Girsanov change of measure as presented in \cite{DiNunnoGirsanov}. Define the measure $\mathbb{Q}$ by 
$ 	    d\mathbb{Q} = \mathcal{M}(T) dP(T)\quad \text{on} \ \calG_T,$
	where 
	\begin{align}
	    d\mathcal{M}(t) &= \mathcal{M}(t)\sigma(t)dB(t)\nonumber \\
	    \mathcal{M}(0)& = 1.\label{girsanov M}
	\end{align}
An explicit solution for \eqref{girsanov M} is obtained by the It\^o formula (see \cite{DiNunnoGirsanov}) and is given by
	\begin{align*}
	    \mathcal{M}(t) &= \exp\Bigg\{\int_0^t\sigma(s) dB(s) - \int_0^t\frac 1 2 \sigma(s)^2\lambda_s^B ds\Bigg\} , \quad t\in [0,T].
	\end{align*}
	We thus have that, under the measure $\mathbb{Q}$,
	\begin{equation*}
	    dB^\sigma(t) = dB(t) - \sigma(t)\lambda^B_t dt,
	\end{equation*}
	is a $\bbG$-martingale. Equation \eqref{Y(t) example} can now be rewritten under $\mathbb{Q}$ as
	\begin{align}\label{Y(t) example Q}
	    d \hat p (t)&= e^{-\delta(T-t)} \hatu(t)dt + \Bigg(r(t,t) + \int_0^t \partial_t r(t,s) ds \Bigg)  \hat p (t) dt + \hat q (t)dB^\sigma(t)\nonumber \\
	     \hat p (T) &= 0,
	\end{align}
	Thanks to \cite{DiNunno1} we know that \eqref{Y(t) example Q} admits a unique solution $(\hat p, \hat q)$ and that the process $ \hat p $ is given by 
    \begin{align}
     \hat p (t) &= \E_\mathbb{Q}\Bigg[\int_t^T\exp\left\{\int_t^s\tilde{r}(v) dv \right\}e^{-\delta(T-s)}\hatu(s)ds\Bigg],\nonumber
    \end{align}
    where we defined $\tilde r(t) := r(t,t) + \int_0^t \partial_t r(t,s) ds$. We thus obtain that
    \begin{align}
    \E\Bigg[ \hat p (t)|\F_t\Bigg] &= \E\Bigg[\frac{1}{\mathcal{M}(T)}\int_t^T\exp\left\{\int_t^s\tilde{r}(v) dv \right\}e^{-\delta(T-s)} \hatu(s)ds\Big|\F_t\Bigg]. \label{soluzione_esempio}
    \end{align}
       Substituting \eqref{soluzione_esempio} in \eqref{NSC_rewritten} we obtain a characterization of $\hatu(t)$.
	
	\section*{Declarations}
	\noindent\textbf{Funding.} The research leading to these results received funding from the Research Council of Norway within the project STORM: Stochastics for Time-Space Risk Models, grant number: 274410.

    \noindent\textbf{Conflicts of interest.}
    The authors have no competing interests to declare that are relevant to the content of this article.

	\bibliographystyle{plain}
	\bibliography{bib}

\end{document}